\numberwithin{equation}{section}
\newtheorem{theorem}{Theorem}[section]
\newtheorem{lemma}[theorem]{Lemma}
\newtheorem{proposition}[theorem]{Proposition}
\theoremstyle{definition}
\newtheorem{definition}[theorem]{Definition}
\theoremstyle{remark}
\newtheorem{remark}[theorem]{Remark}
\newcommand{\R}{\mathbb{R}}
\newcommand{\Z}{\mathbb{Z}}
\newcommand{\Del}{\Delta}
\newcommand{\del}{\delta}
\newcommand{\om}{\omega}
\newcommand{\lam}{\lambda}
\newcommand{\Lam}{\Lambda}
\newcommand{\ve}{\varepsilon}
\newcommand{\pt}{\partial}
\newcommand{\ce}{\mathrel{\mathop:}=}
\newcommand{\la}{\langle}
\newcommand{\ra}{\rangle}
\newcommand{\rad}{\mathrm{rad}}
\newcommand{\Hr}{H_{\mathrm{rad}}}
\renewcommand{\l}{\mathopen{}\mathclose\bgroup\left}
\renewcommand{\r}{\aftergroup\egroup\right}
\def\norm[#1]{\left\Vert #1 \right\Vert}
\def\abs[#1]{\left\vert #1 \right\vert}
\def\tbra[#1,#2]{\left\langle #1 , #2\right\rangle} %
\title[Instability of stationary solutions in 1d]{Instability of stationary solutions for double power nonlinear Schr\"odinger equations in one dimension}
\author[N. Fukaya]{Noriyoshi Fukaya}
\address{
Waseda Research Institute for Science and Engineering, Waseda University, Tokyo 169-8555, Japan
\newline\indent
Osaka Central Advanced Mathematical Institute, Osaka Metropolitan University, 
Osaka 558-8585, Japan
\newline\indent
Institute for Mathematics and Computer Science, Tsuda University, Tokyo 187-8577, Japan}
\email{nfukaya@aoni.waseda.jp}
\author[M. Hayashi]{Masayuki Hayashi}
\address{Graduate School of Human and Environmental Studies,
Kyoto University, Kyoto 606-8501, Japan
\newline\indent
Waseda Research Institute for Science and Engineering, Waseda University, Tokyo 169-8555, Japan
}
 \email{hayashi.masayuki.3m@kyoto-u.ac.jp}
\subjclass[2020]{35Q55, 35B35}
\keywords{Nonlinear Schr\"odinger equation, 
Double power nonlinearities,
Stationary solution, 
Instability}
\begin{document}

\maketitle
\setcounter{tocdepth}{1}

\begin{abstract}
We consider a double power nonlinear Schr\"odinger equation possessing the algebraically decaying stationary solution $\phi_0$ as well as exponentially decaying standing waves $e^{i\omega t}\phi_\omega(x)$ with $\omega>0$. According to the general theory, stability properties of standing waves are determined by the derivative of $\omega\mapsto M(\omega)\ce\frac{1}{2}\|\phi_\omega\|_{L^2}^2$; namely $e^{i\omega t}\phi_\omega$ with $\omega>0$ is stable if $M'(\omega)>0$ and unstable if $M'(\omega)<0$. However, the stability/instability of stationary solutions is outside the general theory from the viewpoint of spectral properties of linearized operators. In this paper we prove the instability of the stationary solution $\phi_0$ in one dimension under the condition $\lim_{\omega\downarrow 0}M'(\omega)\in[-\infty, 0)$. The key in the proof is the construction of the one-sided derivative of $\omega\mapsto\phi_\omega$ at $\omega=0$, which is effectively used to construct the unstable direction.
\end{abstract}

\tableofcontents

\section{Introduction}

We consider the following nonlinear Schr\"odinger equation with double power nonlinearities:
\begin{equation}\label{eq:1.1}
    i\pt_t u
    =-\Del u
    +|u|^{p-1}u
    -|u|^{q-1}u,\quad 
    (t,x)\in\R\times\R^d, 
\end{equation}
where $1<p<q<1+4/(d-2)_+$ and $u$ is the complex-valued unknown function of $(t,x)$. It is well-known that the Cauchy problem of \eqref{eq:1.1} is locally well-posed in the energy space $H^1(\R^d)\ce H^1(\R^d; \mathbb{C})$ (see \cite[Chapter 4]{C03}). Moreover, the energy
\begin{align*}
    E(u(t))
    \ce\frac{1}{2}\|\nabla u(t)\|_{L^2}^2
    +\frac{1}{p+1}\|u(t)\|_{L^{p+1}}^{p+1}
    -\frac{1}{q+1}\|u(t)\|_{L^{q+1}}^{q+1}
\end{align*}
and the charge $\frac{1}{2}\|u(t)\|_{L^2}^2$ are conserved by the flow of \eqref{eq:1.1}. If we consider the case $1<p<q<1+4/d$, which is of our interest, then it follows from the conservation laws and the suitable Gagliardo--Nirenberg inequality that the solution for any initial data in $H^1(\R^d)$ exists globally in time (see \cite[Chapter 6]{C03}). 

Equation \eqref{eq:1.1} has standing wave solutions of the form
\[  u_\om(t, x)
    =e^{i\om t}\phi_\om(x), \]
where $\om\ge 0$ and $\phi_\om$ is the unique positive, radial, and decreasing solution (ground state) of the stationary equation
\begin{equation}\label{eq:1.2}
  -\Del \phi 
  +\om\phi 
  +|\phi|^{p-1}\phi
  -|\phi|^{q-1}\phi
  =0,\quad 
  x\in\R^d. 
\end{equation}
We refer to \cite{BL83-1} for existence, \cite{LN93} for radial symmetry properties, and \cite{PS98, ST00} for uniqueness of the ground states. 
We note that \eqref{eq:1.2} is rewritten by using the action functional as $S_\om'(\phi)=0$, where $S_\om(\phi)$ is defined by
\begin{align*}
S_\om(\phi)&=E(\phi)+\frac{\om}{2}\norm[\phi]_{L^2}^2
\\
&=\frac{1}{2}\|\nabla\phi\|_{L^2}^2+\frac{\om}{2}\norm[\phi]_{L^2}^2+\frac{1}{p+1}\|\phi\|_{L^{p+1}}^{p+1}-\frac{1}{q+1}\|\phi\|_{L^{q+1}}^{q+1}.
\end{align*}
From the classical variational argument, the profile $\phi_\om$ is found in the space $H^1(\R^d)$ if $\om>0$ and in $(\dot{H}^1\cap L^{p+1})(\R^d)$ if $\om=0$ (see \cite{FH21}). Moreover, it is known that $\phi_\om$ with $\om>0$ decays exponentially at infinity (see, e.g., \cite{BL83-1}). On the other hand, the stationary solution $\phi_0$ decays algebraically:
\begin{equation} \label{eq:1.3}
    \phi_0(r)
    \sim\left\{
    \begin{aligned}
   &r^{-2/(p-1)}&
   &\text{if }1<p<d/(d-2)_+,
\\ &r^{-(d-2)}(\log r)^{-(d-2)/2}&
   &\text{if }p=d/(d-2)_+,
\\ &r^{-(d-2)}&
   &\text{if }p>d/(d-2)_+
    \end{aligned}
    \right.
\end{equation}
as $r\to\infty$ (see \cite{V81}), where we write
\[  f(r)\sim g(r) \text{ as }r\to l \]
for $l\in[-\infty,\infty]$ if there exists a positive constant $c_0$ such that 
\[  \lim_{r\to l}\frac{f(r)}{g(r)}
    =c_0.  \]
The sharp decay estimate \eqref{eq:1.3} implies that $\phi_0\in L^2(\R^d)$ if and only if 
\begin{equation}
\label{eq:1.4}
  \begin{aligned}
   &1<p<1+4/d&
   &\text{if }1\le d\le 3,
\\ &1<p\le 1+4/d&
   &\text{if }d=4,
\\ &1<p< 1+4/(d-2)_+&
   &\text{if }d\ge 5,
  \end{aligned}
\end{equation}
see \cite[Corollary 1.4]{FH21} for more details.


We now give the definition of stability/instability of standing waves for \eqref{eq:1.1}.
\begin{definition}\label{def:1.1}
We assume \eqref{eq:1.4} if $\omega=0$. The standing wave $e^{i\om t}\phi_\om$ of \eqref{eq:1.1} is \emph{stable} if for any $\ve>0$ there exists $\delta>0$ such that if $u_0\in H^1(\R^d)$ satisfies $\|u_0-\phi_\om\|_{H^1}<\delta$, then the $H^1$-solution $u(t)$ of \eqref{eq:1.1} with $u(0)=u_0$ satisfies 
\begin{align*}
    \inf_{(\theta, y)\in\R\times\R^d}\|u(t)-e^{i\theta}\phi_\om(\cdot-y)\|_{H^1}<\ve
\end{align*}
for all $t\in\R$. Otherwise, the standing wave is \emph{unstable}.
\end{definition}
It is known from the abstract theory of \cite{GSS87} (see also \cite{SS85, W85, W86}) that stability properties of standing waves are determined by the derivative of
\begin{align*}
\omega\mapsto M(\omega)\ce\frac{1}{2}\|\phi_\om\|_{L^2}^2
\quad \text{for $\om>0$},
\end{align*}
namely the standing wave $e^{i\om t}\phi_\om$ with $\om>0$ is stable if $M'(\om)>0$ and unstable if $M'(\om)<0$. 
However, it is delicate in general to determine the sign of $M'(\om)$ for double power nonlinearities because of the lack of scaling symmetries. 

The case $1<p<q<1+4/d$ is interesting to consider the stability/instability problem in \eqref{eq:1.1} because stability properties of standing waves may change with frequency $\omega$ even fixed exponents $p$ and $q$ in this case. By perturbation arguments from pure power nonlinearities one can prove that the standing wave $e^{i\om t}\phi_\om$ is stable for large $\om>0$ (see \cite{F03}). However, $e^{i\om t}\phi_\om$ with small $\om>0$ can be unstable, and indeed it was proved in \cite{O95dp} that if $d=1$ and $p+q>6$, then $e^{i\om t}\phi_\om$ is unstable for small $\om>0$. The authors \cite{FH21} improved this sufficient condition and proved that if
\begin{align}
\label{eq:1.5}
    q>\gamma_d(p)
    \ce\frac{16+d^2+6d-pd(d+2)}{d(d+2-(d-2)p)}
\end{align} 
then $e^{i\om t}\phi_\om$ is unstable for small  $\om\ge0$ 
(see also Figure \ref{fig:1}\subref{fig:1a}).
We note that the condition \eqref{eq:1.5} is characterized in terms of stationary solutions as
\begin{align*}
\eqref{eq:1.5}\iff \l.\pt_\lam^2 S_0(\phi_0^\lam)\r|_{\lam=1}<0\quad
(\phi_0^\lam(x)\ce\lambda^{d/2}\phi_0(\lam x) ).
\end{align*}
The results in \cite{O95dp, FH21} show the existence of unstable standing waves in the absence of blowup solutions, which gives a major difference with the case of pure power nonlinearities\footnote{It is well-known for the pure power nonlinear Schr\"odinger equations that the standing wave $e^{i\om t}\phi_\om$ for all $\om>0$ is stable if the exponent $p$ of the nonlinearity satisfies $p<1+4/d$ (see \cite{CL82}), and unstable by blowup if $1+4/d\le p<1+4/(d-2)_+$ (see \cite{BC81, W82}). }
 and implies that \eqref{eq:1.1} has a richer dynamics.
%

\begin{figure}[ht]
\begin{minipage}[b]{0.45\linewidth}
\includegraphics[width=\linewidth]{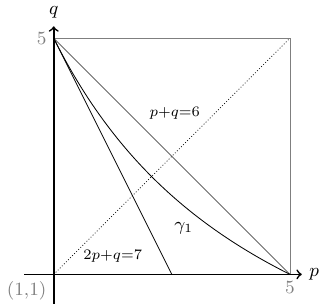}
\subcaption{Comparison of three conditions}
\label{fig:1a}
\end{minipage}
\begin{minipage}[b]{0.45\linewidth}
\includegraphics[width=\linewidth]{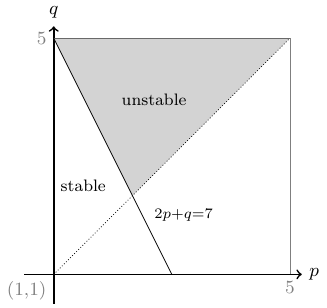}
\subcaption{Stability/instability with small $\om>0$}
\label{fig:1b}
\end{minipage}
\caption{One can compare the conditions of \cite{O95dp, FH21} with the sharp condition in the left picture. We note that the right picture does not describe the stability/instability with $\om=0$.}
\label{fig:1}
\end{figure}
Recently, it was independently proved in \cite{H21pre, KLT22} that when $d=1$, there exists the limit of $M'(\om)$ as $\om\downarrow0$ and\begin{equation} \label{eq:1.6}
    M'(0)
    \ce\lim_{\om\downarrow 0}M'(\om)
    \left\{
    \begin{aligned}
    &>0&
    &\text{if }2p+q<7,
\\  &=0&
    &\text{if }2p+q=7,
\\  &<0&
    &\text{if }2p+q>7 \text{ and }p<7/3,
\\  &=-\infty&
    &\text{if }p\ge 7/3,
    \end{aligned}\right.
\end{equation}
which in particular implies that the condition $2p+q>7$ is sharp for yielding instability of standing waves with small positive frequencies 
(see Figure \ref{fig:1}\subref{fig:1b}). 
We note that in \cite{O95dp, H21pre, KLT22} the proof of stability/instability is done by applying the abstract theory \cite{GSS87}, which is not applicable for the case $\om=0$ due to the lack of coercivity properties of linearized operators.
On the other hand, in \cite{FH21} the instability is proved by combining variational characterization, virial identities, and the condition $\l.\pt_\lam^2 S_\om(\phi_\om^\lam)\r|_{\lam=1}<0$, which is also applicable to the case $\om=0$. 
While the analysis based on the condition $\l.\pt_\lam^2 S_\om(\phi_\om^\lam)\r|_{\lam=1}<0$ is applicable to many types of equations (see \cite{O95ds, FO03, CO14, FO18, FO19}), 
this condition is not expected in general to be sharp for yielding the instability. Indeed, in our case the condition of \cite{FH21} with $d=1$
\begin{align}
\label{eq:1.7}
q>\gamma_1(p)=\frac{23-3p}{3+p}
\iff \l.\pt_\lam^2 S_0(\phi_0^\lam)\r|_{\lam=1}<0
\end{align}
has a gap with the condition
\begin{align}
\label{eq:1.8}
2p+q>7\iff M'(0)\in[-\infty,0).
\end{align}
By analogy with the stability/instability theory with positive frequencies, it would be natural to conjecture that the statement
\begin{equation}
 \label{eq:1.9}
    M'(0)\in[-\infty, 0)\implies\text{$\phi_0$ is unstable}
\end{equation}
holds in a general setting and that \eqref{eq:1.9} is sharp for the instability of stationary solutions. In this paper we prove that \eqref{eq:1.9} is true for \eqref{eq:1.1} with $d=1$ and fill the remaining gap between \eqref{eq:1.7} and \eqref{eq:1.8}. 


We now state our main results of this paper.
We set the function
\begin{align} 
\label{eq:1.10}
  \eta_\om
  &\ce \pt_\om\phi_\om\quad 
  \text{for $\om>0$}.
\end{align}
It follows from \eqref{eq:1.2} that $\eta_\om$
satisfies the equation
\begin{align}
\label{eq:1.11}
L_\om\eta_\om =-\phi_\om\quad\text{in}~\R^d\quad(\om>0),
\end{align}
where $L_\om$ is the linearized operator around $\phi_\om$ defined by
\begin{align*}
L_\om=S_\om''(\phi_\om)|_{H^1(\R^d; \R)}= -\Delta +\om+p\phi_\om^{p-1}-q\phi_\om^{q-1}\quad\text{for}~\om\ge0.
\end{align*}
We consider the limit problems of \eqref{eq:1.10} and \eqref{eq:1.11} as $\om\downarrow0$ in the case of $d=1$. Our first result is the following.
\begin{theorem}\label{thm:1.2}
Let $d=1$ and $1<p<q$. Then there exists the smooth even function $\eta_0$ on $\R$ such that $\eta_\omega(x)\to\eta_0(x)$ as $\omega\downarrow0$ for all $x\in\R$. Moreover, $\eta_0$ satisfies the equation
\begin{align}
\label{eq:1.12}
L_0\eta_0=-\phi_0\quad\text{\rm in}~\R,
\end{align}
and   
\begin{align}
    \label{eq:1.13}
    \eta_0(x)
   &\sim -x^{-2/(p-1) +2},
\\ \label{eq:1.14} 
    \eta_0'(x)
   &=O(x^{-2/(p-1) +1})
\end{align}
as $x\to\infty$. 
\end{theorem}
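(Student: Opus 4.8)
The plan is to work entirely at the level of the one--dimensional ODE. In $d=1$ the profile $\phi_\om$ is the unique positive even solution of $-\phi''+\om\phi+\phi^p-\phi^q=0$ with $\phi'(0)=0$, and its peak $\alpha_\om\ce\phi_\om(0)$ is determined by the first integral $\tfrac12(\phi_\om')^2=\tfrac{\om}{2}\phi_\om^2+\tfrac{1}{p+1}\phi_\om^{p+1}-\tfrac{1}{q+1}\phi_\om^{q+1}$ (the integration constant vanishes because $\phi_\om$ decays). Evaluating at $x=0$ gives $\tfrac{\om}{2}+\tfrac{1}{p+1}\alpha_\om^{p-1}-\tfrac{1}{q+1}\alpha_\om^{q-1}=0$, from which $\alpha_\om\to\alpha_0=((q+1)/(p+1))^{1/(q-p)}$ and, by implicit differentiation (the relevant derivative being nonzero at $\alpha_0$), $\pt_\om\alpha_\om$ converges to a finite limit as $\om\downarrow0$.

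First I would prove the convergence statement. Each $\eta_\om$ solves the linear initial value problem coming from \eqref{eq:1.11}, namely $\eta_\om''=(\om+p\phi_\om^{p-1}-q\phi_\om^{q-1})\eta_\om+\phi_\om$ with $\eta_\om(0)=\pt_\om\alpha_\om$ and $\eta_\om'(0)=0$ (the latter since $\phi_\om'(0)=0$ for every $\om$). Because $\phi_\om$ is decreasing and $\phi_\om(0)\to\alpha_0>0$, on each compact interval $\phi_\om$ is bounded below away from zero uniformly for small $\om$; hence the coefficients and data of this problem converge locally uniformly as $\om\downarrow0$. Continuous dependence for linear ODEs then yields $\eta_\om\to\eta_0$ in $C^2_{\mathrm{loc}}(\R)$, where $\eta_0$ solves \eqref{eq:1.12} with $\eta_0(0)=\lim_{\om\downarrow0}\pt_\om\alpha_\om$ and $\eta_0'(0)=0$. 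Evenness is inherited from the even data and even coefficients, and smoothness follows by bootstrapping the equation, since $\phi_0>0$ is smooth.

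The decay \eqref{eq:1.13}--\eqref{eq:1.14} is where the real work lies, because $L_0$ has two homogeneous solutions with Euler-type behaviour at infinity, a fast-growing one $\sim x^{2p/(p-1)}$ and a decaying one $\phi_0'\sim x^{-(p+1)/(p-1)}$, and one must guarantee that the limit $\eta_0$ does not pick up the growing mode. The mechanism that rules it out is an exact first-order identity: differentiating the first integral in $\om$ and using $\phi_\om''=\om\phi_\om+\phi_\om^p-\phi_\om^q$ gives, with no integration constant, $\phi_\om'\eta_\om'-\phi_\om''\eta_\om=\tfrac12\phi_\om^2$, i.e.
\[ \frac{d}{dx}\Big(\frac{\eta_\om}{\phi_\om'}\Big)=\frac{\phi_\om^2}{2(\phi_\om')^2}\qquad\text{on }(0,\infty). \]
Passing to the limit with the $C^2_{\mathrm{loc}}$ convergence above transfers this identity verbatim to $\om=0$; the absence of any additive constant is precisely what forces $\eta_0$ onto the non-growing branch.

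It then remains to integrate. By \eqref{eq:1.3} with $d=1$ one has $\phi_0(x)\sim c_0 x^{-2/(p-1)}$, and the first integral at $\om=0$ gives the matching gradient asymptotics $\phi_0'(x)\sim-\tfrac{2c_0}{p-1}x^{-2/(p-1)-1}$, so that $\phi_0^2/(2(\phi_0')^2)\sim\tfrac{(p-1)^2}{8}x^2$. Integrating the identity from a fixed $x_1>0$ gives $\eta_0/\phi_0'\sim\tfrac{(p-1)^2}{24}x^3$, hence $\eta_0(x)\sim-\tfrac{(p-1)c_0}{12}x^{-2/(p-1)+2}$, which is \eqref{eq:1.13}; estimate \eqref{eq:1.14} follows by inserting the same asymptotics into $\eta_0'=\phi_0''\,(\eta_0/\phi_0')+\phi_0^2/(2\phi_0')$. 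The main obstacle is thus the passage to the limit $\om\downarrow0$: one must justify that $\eta_\om$, which for $\om>0$ is the global exponentially decaying function $\pt_\om\phi_\om$, converges strongly enough to legitimise the limiting identity, and that $\pt_\om\alpha_\om$ stays bounded; once the exact identity is available at $\om=0$, the sharp decay is essentially automatic.
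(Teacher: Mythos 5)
Your proposal is correct, and it reaches Theorem \ref{thm:1.2} by a genuinely different route from the paper's. The paper constructs $\eta_0$ by differentiating the quadrature representation $\phi_\om(x)^2=a(\om)G(b(\om)x;\om)$ of Lemma \ref{lem:2.2} in $\om$ and passing to the limit in the resulting formula; the decay \eqref{eq:1.13} is then read off from the asymptotics of $G_z$ and $G_\om$ (Lemma \ref{lem:2.4}), and smoothness at $x=0$ needs a dedicated argument (uniform bounds on $\eta_\om''$ plus the mean value theorem, cf.\ Remark \ref{rem:2.6}) because the explicit formula degenerates there. You instead characterize $\eta_\om$ as the solution of a linear initial value problem at $x=0$ with data $(\pt_\om\alpha_\om,0)$ and pass to the limit by continuous dependence, which delivers the pointwise limit, evenness, smoothness, and \eqref{eq:1.12} in one stroke, with no special treatment of the origin; and you derive \eqref{eq:1.13} by passing to the limit in the $\om$-differentiated first integral, i.e.\ the Wronskian identity $\phi_\om'\eta_\om'-\phi_\om''\eta_\om=\tfrac{1}{2}\phi_\om^2$, and then integrating $\frac{d}{dx}(\eta_0/\phi_0')=\phi_0^2/(2(\phi_0')^2)$. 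The vanishing integration constant, inherited from the exponentially decaying regime $\om>0$, is exactly what kills the growing homogeneous mode $x^{2p/(p-1)}$, and the integration produces \eqref{eq:1.13} together with its constant; this is more elementary and more conceptual than Lemma \ref{lem:2.4}, since it explains \emph{why} $\eta_0$ sits on the subdominant branch. What the paper's heavier machinery buys instead is reuse: the functions $F$, $G$, $a(\om)$ reappear in Section \ref{sec:3} in the link between $\eta_0$ and $M'(0)$ through the Iliev--Kirchev formula. For \eqref{eq:1.14} the two arguments essentially coincide: your formula $\eta_0'=\phi_0''(\eta_0/\phi_0')+\phi_0^2/(2\phi_0')$ is equivalent to the paper's \eqref{eq:2.14} via $\phi_0'=-\sqrt{W(\phi_0^2;0)}$ and $\phi_0''=W_s(\phi_0^2;0)\phi_0$ (the paper's display appears to carry a harmless sign slip), and both give only an $O$-bound because the two terms may cancel at leading order (Remark \ref{rem:2.5}). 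Two details you should make explicit in a full write-up: the locally uniform convergence $\phi_\om\to\phi_0$ must itself be justified by continuous dependence for the nonlinear initial value problem together with uniqueness of the $\om=0$ problem with data $\phi_0(0)=\alpha_0$, $\phi_0'(0)=0$ --- your remark that $\phi_\om$ stays bounded away from zero on compacta is a consequence of this convergence, not a proof of it; and the identity $\phi_\om'\eta_\om'-\phi_\om''\eta_\om=\tfrac{1}{2}\phi_\om^2$ for $\om>0$ uses the interchange $\pt_\om(\phi_\om')=(\pt_\om\phi_\om)'$, i.e.\ joint smoothness of $(x,\om)\mapsto\phi_\om(x)$ on $\R\times(0,\infty)$, which the paper also invokes.
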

When $d=1$, $\phi_\om$ can be explicitly represented in terms of a certain inverse function by quadrature. The explicit formula of $\phi_\om$ with $\om>0$ is already used in \cite{IK93}, but we newly derive the limit function of $\eta_\om$ as $\om\downarrow0$ here.
It is easily verified that the function $\eta_0$ satisfies 
\begin{align}
\label{eq:1.15}
\eta_0(x)=\lim_{\om\downarrow0}\frac{\phi_\om(x)-\phi_0(x)}{\omega}
\quad\text{for}~x\in\R,
\end{align}
which turns out that $\eta_0$ gives the one-sided derivative of $\om\mapsto\phi_\om$ at $\om=0$. We recall that $\phi_\om$ with $\om>0$ decays exponentially and $\phi_0$ decays algebraically, so one can see from \eqref{eq:1.15} that the derivation of $\eta_0$ is highly nontrivial even for the one-dimensional case. 
When $q=2p-1$, the function $\eta_0$ is completely expressed in terms of elementary functions (see Appendix \ref{sec:B}), which is of independent interest.

We use the function $\eta_0$ to construct the unstable direction of $\phi_0$. Here we quickly review the instability theory for $e^{i\om t}\phi_\om$ with $\om>0$. The proof of the instability is reduced to find the function $\psi\in H^1_{\rm rad}(\R^d)$ satisfying
\begin{align}
\label{eq:1.16}
(\psi, \phi_\om)_{L^2}
  =0\quad\text{and}\quad\tbra[L_\om\psi, \psi] <0.
\end{align}
We consider the function
\begin{align*}
\psi_\omega=\phi_\omega-\frac{\norm[\phi_\omega]_{L^2}^2}{M'(\omega)}\eta_\omega
\quad\text{for}~\omega>0,
\end{align*}
which belongs to $H^1_{\rm rad}(\R^d)$ and satisfies $(\psi_\om, \phi_\om)_{L^2}=0$. Moreover, a direct calculation shows that
\begin{align}
\label{eq:1.17}
\tbra[L_\omega\psi_\omega,\psi_\omega]
=\tbra[L_\omega\phi_\omega,\phi_\omega]+\frac{\norm[\phi_\omega]_{L^2}^4}{M'(\omega)}.
\end{align}
Therefore, if we assume $M'(\om)<0$, then it follows from \eqref{eq:1.17} and the fact $\tbra[L_\om\phi_\om,\phi_\om]<0$ that $\psi_\om$ satisfies the condition \eqref{eq:1.16}.

From this observation, the function 
\begin{align*}
\psi_0=\phi_0-\frac{\norm[\phi_0]_{L^2}^2}{M'(0)}\eta_0
\quad \bigl(\eta_0=\lim_{\omega\downarrow0}\eta_\omega\bigr)
\end{align*}
is a natural candidate satisfying \eqref{eq:1.16} with $\om=0$. However, one can see from \eqref{eq:1.13} that $\eta_0$ does not belong to $H^1_{\rm rad}(\R)$ in general (or can even grow at infinity) even if we consider the case of $\phi_0\in H^1_{\rm rad}(\R)$.\footnote{This situation is very different from the case of $\om>0$ because $\eta_\om$ as well as $\phi_\om$ decays exponentially when $\om>0$.} 
To solve this problem, we use cut-off functions to approximate $\eta_0$ by the localized function, and then obtain the following result. 
\begin{theorem}\label{thm:1.3}
Let $d=1$ and $1<p<q<5$. If $M'(0)\in[-\infty,0)$, then there exists $\psi\in H_{\rad}^1(\R)$ satisfying \eqref{eq:1.16} with $\om=0$.
\end{theorem}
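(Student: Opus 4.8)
The plan is to realize the formal candidate $\psi_0=\phi_0-\frac{\norm[\phi_0]_{L^2}^2}{M'(0)}\eta_0$—which fails to lie in $H^1_{\rad}(\R)$ because of the growth of $\eta_0$ recorded in \eqref{eq:1.13}—as a member of a family of genuine $H^1$-functions obtained by cutting off $\eta_0$. Fix an even $\chi\in C_c^\infty(\R)$ with $\chi\equiv1$ on $[-1,1]$ and $\mathrm{supp}\,\chi\subset[-2,2]$, set $\chi_R(x)\ce\chi(x/R)$ and $\eta_0^R\ce\chi_R\eta_0\in H^1_{\rad}(\R)$, and define
\[
\psi_R\ce\phi_0-a_R\eta_0^R,\qquad a_R\ce\frac{\norm[\phi_0]_{L^2}^2}{(\eta_0^R,\phi_0)_{L^2}}.
\]
Since $\eta_0\phi_0$ is negative for large $|x|$ by \eqref{eq:1.13} and \eqref{eq:1.15}, the quantity $(\eta_0^R,\phi_0)_{L^2}=\int_\R\chi_R\eta_0\phi_0\,dx$ is eventually negative and converges in $[-\infty,0)$ to $M'(0)$, so $a_R$ is well defined and negative for all large $R$; by construction $\psi_R\in H^1_{\rad}(\R)$ and $(\psi_R,\phi_0)_{L^2}=0$. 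It then suffices to prove $\tbra[L_0\psi_R,\psi_R]<0$ for some large $R$ and to take $\psi\ce\psi_R$.

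Expanding the quadratic form,
\[
\tbra[L_0\psi_R,\psi_R]=\tbra[L_0\phi_0,\phi_0]-2a_R\tbra[L_0\phi_0,\eta_0^R]+a_R^2\tbra[L_0\eta_0^R,\eta_0^R],
\]
I would study the three terms as $R\to\infty$. The first is a finite negative constant: from $-\phi_0''+\phi_0^p-\phi_0^q=0$ one has $L_0\phi_0=(p-1)\phi_0^p-(q-1)\phi_0^q$, and the Nehari identity $\norm[\phi_0']_{L^2}^2+\norm[\phi_0]_{L^{p+1}}^{p+1}=\norm[\phi_0]_{L^{q+1}}^{q+1}$ gives $\tbra[L_0\phi_0,\phi_0]=-(q-1)\norm[\phi_0']_{L^2}^2-(q-p)\norm[\phi_0]_{L^{p+1}}^{p+1}<0$. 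For the second term, $\tbra[L_0\phi_0,\eta_0^R]=\int_\R[(p-1)\phi_0^p-(q-1)\phi_0^q]\chi_R\eta_0\,dx\to\int_\R(L_0\phi_0)\eta_0\,dx$, the limiting integral being absolutely convergent precisely when $p<5$; integrating by parts and using \eqref{eq:1.12}, the Wronskian boundary terms $\phi_0\eta_0'$ and $\phi_0'\eta_0$ decay like $x^{-4/(p-1)+1}$ by \eqref{eq:1.13}--\eqref{eq:1.14} and vanish again exactly because $p<5$, so $\tbra[L_0\phi_0,\eta_0^R]\to(\phi_0,L_0\eta_0)_{L^2}=-\norm[\phi_0]_{L^2}^2$.

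The decisive quantity is $a_R^2\tbra[L_0\eta_0^R,\eta_0^R]$, for which I would use $L_0(\chi_R\eta_0)=-\chi_R\phi_0-2\chi_R'\eta_0'-\chi_R''\eta_0$ to write
\[
\tbra[L_0\eta_0^R,\eta_0^R]=-\int_\R\chi_R^2\phi_0\eta_0\,dx-2\int_\R\chi_R\chi_R'\eta_0'\eta_0\,dx-\int_\R\chi_R\chi_R''\eta_0^2\,dx.
\]
The two commutator integrals are supported on $R\le|x|\le2R$, where $|\chi_R'|\le CR^{-1}$ and $|\chi_R''|\le CR^{-2}$; feeding in \eqref{eq:1.13}--\eqref{eq:1.14} they are $O(R^{2-4/(p-1)})$ and $O(R^{3-4/(p-1)})$ respectively. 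Here I split on $M'(0)$, noting that by \eqref{eq:1.13} the integral $\int_\R\eta_0\phi_0\,dx$ converges (so $M'(0)$ is finite) exactly when $p<7/3$. If $M'(0)\in(-\infty,0)$, then $p<7/3$, both exponents above are negative, the commutator terms vanish, $\tbra[L_0\eta_0^R,\eta_0^R]\to-M'(0)$, and the expansion collapses to the $\om=0$ analogue of \eqref{eq:1.17}, namely $\tbra[L_0\psi_R,\psi_R]\to\tbra[L_0\phi_0,\phi_0]+\norm[\phi_0]_{L^2}^4/M'(0)<0$. If $M'(0)=-\infty$, then $a_R\to0$ and the middle term tends to $0$; and although $\tbra[L_0\eta_0^R,\eta_0^R]$ may now grow like $R^{3-4/(p-1)}$ (or $\log R$ at $p=7/3$), the matching rate $a_R=O(R^{4/(p-1)-3})$ (resp.\ $O(1/\log R)$), read off from $(\eta_0^R,\phi_0)_{L^2}\sim-R^{3-4/(p-1)}$ (resp.\ $-\log R$), forces $a_R^2\tbra[L_0\eta_0^R,\eta_0^R]\to0$, whence $\tbra[L_0\psi_R,\psi_R]\to\tbra[L_0\phi_0,\phi_0]<0$. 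In both cases $\psi=\psi_R$ satisfies \eqref{eq:1.16} for $R$ large. The crux—and the only place the fine structure of $\eta_0$ is genuinely used—is this last balancing in the regime $M'(0)=-\infty$, where the growth of the cut-off quadratic form must be defeated by the decay of $a_R$; this is exactly what the sharp rates \eqref{eq:1.13}--\eqref{eq:1.14} of Theorem \ref{thm:1.2} supply.
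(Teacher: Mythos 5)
Your construction coincides with the paper's own (your $\psi_R$ and $-a_R$ are exactly the $\psi_R$ and $\beta_R$ of \eqref{eq:4.1}, and the expansion of the quadratic form, the commutator estimates, and the balancing argument for $M'(0)=-\infty$ reproduce Section \ref{sec:4}), but there is one genuine gap: you silently identify $M'(0)$ with $\int_\R\phi_0\eta_0\,dx$. The hypothesis of the theorem concerns $M'(0)=\lim_{\om\downarrow0}M'(\om)=\lim_{\om\downarrow0}\int_\R\phi_\om\eta_\om\,dx$, whereas every step of your argument in the finite case actually involves the quantity $I\ce\int_\R\phi_0\eta_0\,dx$: you need $(\chi_R\eta_0,\phi_0)_{L^2}$ to be eventually negative, you need $\langle L_0(\chi_R\eta_0),\chi_R\eta_0\rangle\to -I$ with $I<0$, and the final negativity $\langle L_0\phi_0,\phi_0\rangle+\|\phi_0\|_{L^2}^4/I<0$ requires $I<0$, not merely $M'(0)<0$ (if $I$ were positive the limit could have either sign, and if $I=0$ your $a_R$ degenerates). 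The equality $M'(0)=I$ is not a routine limit interchange: $\eta_\om\to\eta_0$ only pointwise, $\phi_\om$ decays exponentially while $\phi_0$ decays algebraically, and no dominated function for $\{\eta_\om\}_{0<\om<1}$ over $\R$ is available -- this is precisely why the paper devotes Section \ref{sec:3} (Theorem \ref{thm:3.1}) to proving \eqref{eq:3.1}, via the Iliev--Kirchev quadrature formula \eqref{eq:3.3}, dominated convergence in the quadrature variable, and a rerun of the computation of \cite{IK93} at $\om=0$ to obtain \eqref{eq:3.5}. Your phrases ``converges in $[-\infty,0)$ to $M'(0)$'', ``(so $M'(0)$ is finite)'', and ``$\to-M'(0)$'' all presuppose this identity without proof, so the case $M'(0)\in(-\infty,0)$ does not close as written.

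By contrast, your case $M'(0)=-\infty$ is sound and matches the paper's Case 2 of Theorem \ref{thm:4.1}: there the hypothesis enters only through the divergence $\int_\R\phi_0\eta_0\,dx=-\infty$ at the rate dictated by \eqref{eq:1.13}, and no identification with $M'(0)$ is needed (though note that translating the hypothesis ``$M'(0)=-\infty$'' into ``$p\ge 7/3$'' is itself an appeal to the cited result \eqref{eq:1.6} of \cite{H21pre, KLT22}, not a consequence of \eqref{eq:1.13} as your wording suggests; the same remark applies to your case split). Two minor points: your treatment of the cross term via dominated convergence plus integration by parts, with Wronskian terms killed by $p<5$, is a clean and correct variant of the paper's Leibniz-rule computation \eqref{eq:4.6}--\eqref{eq:4.8}; and your bound $O(R^{2-4/(p-1)})$ for $\int_\R\chi_R\chi_R'\eta_0'\eta_0\,dx$ misses the factor of the interval length -- it should be $O(R^{3-4/(p-1)})$ as for the other commutator term -- but this is harmless since your subsequent argument only uses the weaker rate.
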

The key observation is that the RHS on \eqref{eq:1.17} is still defined for $\om=0$ and $M'(0)\neq0$. Actually we prove that \eqref{eq:1.17} is true for $\om=0$ and $M'(0)\neq0$ in a certain limiting sense (see Theorem \ref{thm:4.1}). 
In the proof of this, we essentially use the sharp decay/growth estimate of $\eta_0$ to handle error terms coming from the cut-off function.  

As a consequence of Theorem \ref{thm:1.3}, we obtain the following instability result. 
\begin{theorem} \label{cor:1.4}
Let $d=1$ and $1<p<q<5$. If $2p+q>7$, then the stationary solution $\phi_0$ of \eqref{eq:1.1} is unstable.
\end{theorem}

Although we cannot use the argument based on coercivity properties of linearized operators as in \cite{W85, W86, GSS87}, we instead apply the variational approach in \cite{SS85, O14}, which still works for stationary solutions (the case $\om=0$).
For the convenience of the reader, in Section \ref{sec:A} we give a proof of the fact that the existence of $\psi\in H^1_{\rm rad}(\R^d)$ satisfying \eqref{eq:1.16} implies the instability in our setting, which in particular contains the proof of Theorem \ref{cor:1.4}.

As relevant works \cite{NOW17, LN18}, the authors study instability of algebraic decaying traveling waves for nonlinear Schr\"odinger equations with derivative coupling, which correspond to stationary solutions of \eqref{eq:1.1} from the viewpoint of elliptic equations of the profiles. However, they use scaling symmetries of equations and the specific information of a two-parameter family of solitary waves, which cannot be applied in our setting. Our approach in this paper has a natural correspondence with the general instability theory \cite{GSS87} and is more robust.

The rest of the paper is organized as follows. In Section \ref{sec:2} we first derive the basic properties of $\phi_\om$ and construct the key function $\eta_0$. Then we derive the sharp decay/growth estimate of $\eta_0$ and complete the proof of Theorem \ref{thm:1.2}. In Section \ref{sec:3} we establish the link between $M'(0)$ and $\eta_0$, which plays an important role to bridge between the previous works \cite{H21pre, KLT22} and our analysis.
In Section \ref{sec:4} we construct the unstable direction in the sense of \eqref{eq:1.16} based on the function $\eta_0$, and prove Theorem \ref{thm:1.3}.

\section{Construction of the key function $\eta_0$}
\label{sec:2}

In this section we construct the key function $\eta_0$ and derive the sharp decay/growth estimates of $\eta_0$.
Throughout this section we assume $d=1$ and $1<p<q$. 

We first give the explicit formula of $\phi_\om$ in terms of a certain inverse function (see \eqref{eq:2.5}), and then derive the limit function of $\pt_\om\phi_\om$ as $\omega\downarrow0$. We recall that $\phi_\omega$ is the unique positive, even, and decreasing solution of the equation
\begin{align}
\label{eq:2.1}
-\phi''+\om\phi +|\phi|^{p-1}\phi-|\phi|^{q-1}\phi=0,\quad x\in\R.
\end{align}
We define the functions $f$ and $W$ by
\begin{align*}
\begin{aligned}
    f(s)
   &=s^{(p-1)/2}
    -s^{(q-1)/2},
\\  W(s; \om)
   &=\om s
    +\frac{2}{p+1}s^{(p+1)/2}
    -\frac{2}{q+1}s^{(q+1)/2}
\end{aligned}
\quad\text{for $s\ge 0$ and $\om\in \R$}.
\end{align*}
We note that $W_s(s; \om)=\om+f(s)$. From the shape of the function $s\mapsto W(s; \om)$, we see that for each $\om\ge0$ there exists the unique positive zero point $a(\om)>0$ of $W(\cdot; \om)$, and $W(\cdot; \om)$ satisfies
\begin{align}\label{eq:2.2}
    W(s; \om)
    &>0\quad 
    \text{on }(0, a(\om)),&
    W(a(\om); \om)
    &=0,&
    W_s(a(\om); \om)
    &<0.&
\end{align}
By the implicit function theorem and $W\in C^\infty((0, \infty)\times\R)$, we deduce that $a\in C^\infty([0, \infty))$. 
\begin{lemma}
\label{lem:2.1}
For $\om\ge0$ and $x\ge0$, we have the relation
\begin{equation} \label{eq:2.3}
  \phi_\om'(x)
  =-\sqrt{W(\phi_\om(x)^2; \om)},
\end{equation}
and $a(\om)=\phi_\om(0)^2$.
\end{lemma}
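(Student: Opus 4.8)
The plan is to derive \eqref{eq:2.3} from a first-integral (conservation of energy) identity for the autonomous ODE \eqref{eq:2.1}. Since $\phi_\om$ is positive, \eqref{eq:2.1} reads $\phi_\om''=\om\phi_\om+\phi_\om^p-\phi_\om^q$ on $\R$. Multiplying by $\phi_\om'$ and integrating, I would obtain
\[
\tfrac12\l(\phi_\om'(x)\r)^2=\tfrac{\om}{2}\phi_\om(x)^2+\tfrac{1}{p+1}\phi_\om(x)^{p+1}-\tfrac{1}{q+1}\phi_\om(x)^{q+1}+C
\]
for some constant $C\in\R$. By the definition of $W$, the right-hand side equals $\tfrac12 W(\phi_\om(x)^2;\om)+C$, so the whole identity reduces to evaluating $C$.

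Next I would show $C=0$ for every $\om\ge0$ using only that $\phi_\om$ is positive and decreasing on $[0,\infty)$ with $\phi_\om(x)\to0$ as $x\to\infty$. Letting $x\to\infty$ in the identity, the right-hand side tends to $C$, so $(\phi_\om')^2\to 2C$. If $C>0$, then $\phi_\om'\to-\sqrt{2C}<0$ (recall $\phi_\om'\le0$ on $[0,\infty)$), which forces $\phi_\om(x)\to-\infty$, contradicting $\phi_\om>0$; if $C<0$, the nonnegative left-hand side would tend to a negative limit, which is impossible. Hence $C=0$, giving $(\phi_\om'(x))^2=W(\phi_\om(x)^2;\om)$, and taking the square root with $\phi_\om'\le0$ yields \eqref{eq:2.3}.

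For the identity $a(\om)=\phi_\om(0)^2$, I would use that $\phi_\om$ is even, so $\phi_\om'(0)=0$, whence $W(\phi_\om(0)^2;\om)=0$ by \eqref{eq:2.3}. Since $\phi_\om(0)>0$ and $a(\om)$ is the \emph{unique} positive zero of $W(\cdot;\om)$ (established in \eqref{eq:2.2}), this forces $\phi_\om(0)^2=a(\om)$.

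The only delicate point is the case $\om=0$, where $\phi_0$ decays merely algebraically and lies in $\dot H^1\cap L^{p+1}$ rather than $H^1$. The argument above is deliberately arranged to avoid any quantitative decay rate for $\phi_0'$, relying solely on monotonicity, positivity, and vanishing at infinity; consequently no extra regularity or decay input is required, and the qualitative ground-state properties suffice to close the $\om=0$ case on the same footing as $\om>0$.
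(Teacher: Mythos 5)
Your proof is correct, and its core is the same as the paper's: both derive the first integral $(\phi_\om')^2=W(\phi_\om^2;\om)$ from the autonomous ODE and then get $a(\om)=\phi_\om(0)^2$ from $\phi_\om'(0)=0$ (evenness) together with the uniqueness of the positive zero of $W(\cdot;\om)$ stated in \eqref{eq:2.2}. The one genuine difference is how the constant of integration is killed. The paper integrates the identity $(\phi_\om'^2)'=W(\phi_\om^2;\om)'$ over $(x,\infty)$ and invokes the fact that \emph{both} $\phi_\om(x)\to 0$ and $\phi_\om'(x)\to 0$ as $x\to\infty$; you instead keep an undetermined constant $C$ and eliminate it using only $\phi_\om\to 0$, positivity, and monotonicity: $C<0$ contradicts nonnegativity of $(\phi_\om')^2$, and $C>0$ forces $\phi_\om'\to-\sqrt{2C}$, hence $\phi_\om\to-\infty$, contradicting positivity. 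This buys you a small but real robustness: you do not need any a priori decay of $\phi_\om'$, which is the only point one might worry about in the $\om=0$ case where the decay of $\phi_0$ is merely algebraic (the paper simply cites the vanishing of $\phi_\om'$ at infinity as known, which is legitimate but is an extra input). As a byproduct your argument re-derives $\phi_\om'(x)\to 0$. The remainder of your proof coincides with the paper's.
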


\begin{proof}
Multiplying $2\phi_\om'$ with the equation \eqref{eq:2.1}, we have
\[  2\phi_\om'\phi_\om''
    =2\om \phi_\om\phi_\om'
    +2f(\phi_\om^2)\phi_\om\phi_\om',\quad x\in\R.  \]
This is rewritten as
\begin{align*}
    (\phi_\om'^2)'
    =\om(\phi_\om^2)'
    +f(\phi_\om^2)(\phi_\om^2)'
    =W(\phi_\om(x)^2; \om)'.
\end{align*}
Integrating both sides on the interval $(x, \infty)$ and using the fact that $\phi_\om(x)$, $\phi_\om'(x)\to 0$ as $x\to\infty$, we obtain 
\begin{align*}
    \phi_\om'(x)^2
    =W(\phi_\om(x)^2; \om),\quad 
    x\ge0. 
\end{align*}
This implies \eqref{eq:2.3} because $\phi_\om'\le 0$ on $[0,\infty)$.

The evenness of $\phi_\om$ implies $\phi_\om'(0)=0$. Thus we have $W(\phi_\om(0)^2; \om)=0$. Since $s\mapsto W(s; \om)$ has the unique positive zero point $a(\omega)$, we deduce $a(\om)=\phi_\om(0)^2$. 
\end{proof}
We set 
\[  F(\tau; \om)
    \ce \int_\tau^1\frac{ds}{\sqrt{s W(a(\om)s; \om)}}\quad 
    \text{for }\tau\in(0,1]~\text{and}~\om\ge 0. 
    \]
We note from \eqref{eq:2.2} that $W(a(\om)s; \om)\sim 1-s$ as $s\uparrow 1$. This yields that the integral has a finite value for $\tau\in(0, 1)$ and $\tau\mapsto F(\tau; \om)$ is continuous on $(0,1]$. From the partial derivative
\begin{align}\label{eq:2.4}
  F_\tau(\tau; \omega)
  =-\frac{1}{\sqrt{\tau W(a(\omega)\tau; \omega)}}
  <0\quad 
  \text{for}~\tau\in(0,1),
\end{align}
we deduce that $\tau\mapsto F(\tau; \om)$ is strictly decreasing. Moreover, it follows from the asymptotic behavior of $s W(s; \om)$ at $s=0$ that
\begin{align*}
F(\tau; \om)\underset{\tau\downarrow0}{\sim}
\l\{
\begin{aligned}
&{-}\om\log \tau&&\text{if}~\omega>0,
\\
&\,\tau^{-(p-1)/4}&&\text{if}~\omega=0.
\end{aligned}
\r.
\end{align*}
Therefore, for each $\om\in[0,\infty)$ the function $F(\cdot;\om)$ is bijective from $(0, 1]$ to $[0, \infty)$ and its inverse function $F^{-1}(\cdot; \om)\colon [0, \infty)\to(0, 1]$ is defined. We set
\begin{align*}
G(z; \om)\ce F^{-1}(z; \om)
\quad\text{for}~z\ge0~\text{and}~\omega\ge0. 
\end{align*}
\begin{lemma}
\label{lem:2.2}
For $\om\ge0$ and $x\ge0$, we have
\begin{equation}\label{eq:2.5}  
 \phi_\om(x)^2
  =a(\om)G(b(\om)x; \om),
\end{equation}
where $b(\om)\ce\frac{2}{\sqrt{a(\om)}}$. In particular, $\phi_\om(x)\to\phi_0(x)$ as $\omega\downarrow0$ for all $x\in\R$.
\end{lemma}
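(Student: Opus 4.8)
The plan is to establish the representation \eqref{eq:2.5} by turning the first-order ODE \eqref{eq:2.3} from Lemma \ref{lem:2.1} into a separated-variables computation, and then to identify the resulting integral with the function $F$ (hence $G=F^{-1}$). First I would fix $\om\ge0$ and work on $[0,\infty)$, where $\phi_\om$ is strictly decreasing from $\phi_\om(0)$ to $0$. Introducing the normalized variable $\tau=\tau(x)\ce\phi_\om(x)^2/a(\om)$, which by Lemma \ref{lem:2.1} runs from $1$ at $x=0$ down to $0$ as $x\to\infty$, I would differentiate to get $\tau'(x)=2\phi_\om(x)\phi_\om'(x)/a(\om)$, and substitute $\phi_\om'(x)=-\sqrt{W(\phi_\om(x)^2;\om)}=-\sqrt{W(a(\om)\tau;\om)}$ from \eqref{eq:2.3}. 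Writing $\phi_\om(x)=\sqrt{a(\om)}\sqrt{\tau}$, this yields
\[
\tau'(x)=-\frac{2}{\sqrt{a(\om)}}\sqrt{\tau\,W(a(\om)\tau;\om)}=-b(\om)\sqrt{\tau\,W(a(\om)\tau;\om)}.
\]

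Next I would separate variables and recognize the defining integrand of $F$. Dividing by $\sqrt{\tau W(a(\om)\tau;\om)}$ gives $\tau'(x)/\sqrt{\tau W(a(\om)\tau;\om)}=-b(\om)$, and integrating in $x$ from $0$ to a generic point $x\ge0$, then changing variables to $s=\tau(x')$, produces
\[
\int_{\tau(x)}^{1}\frac{ds}{\sqrt{s\,W(a(\om)s;\om)}}=b(\om)\,x,
\]
where the sign works out because $\tau$ is decreasing and $\tau(0)=1$. The left-hand side is exactly $F(\tau(x);\om)$ by definition, so $F(\tau(x);\om)=b(\om)x$, and applying the inverse $G=F^{-1}$ gives $\tau(x)=G(b(\om)x;\om)$, i.e. $\phi_\om(x)^2=a(\om)G(b(\om)x;\om)$, which is \eqref{eq:2.5}. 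The integrability near $s=0$ and near $s=1$ needed to justify the finite integral and the change of variables is already recorded in the discussion preceding the lemma (the asymptotics $W(a(\om)s;\om)\sim 1-s$ as $s\uparrow1$ and the behavior at $s=0$ that makes $F$ a bijection onto $[0,\infty)$), so the manipulation is legitimate.

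For the final assertion, $\phi_\om(x)\to\phi_0(x)$ as $\om\downarrow0$, I would argue through the representation \eqref{eq:2.5} by establishing continuity of each ingredient at $\om=0$. Since $a\in C^\infty[0,\infty)$ (from the implicit function theorem, as noted), we have $a(\om)\to a(0)$ and $b(\om)\to b(0)$; the remaining and slightly more delicate point is the joint continuity of $G(z;\om)$ in $\om$ at fixed $z$, which I would deduce from continuity of $F(\tau;\om)$ in $(\tau,\om)$ together with strict monotonicity of $F(\cdot;\om)$ and the fact that $F(\cdot;\om)$ is a bijection onto $[0,\infty)$ for every $\om\ge0$, inverting the limit. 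I expect the main obstacle to be precisely this uniform control of $G$ as $\om\downarrow0$: the integrand $1/\sqrt{sW(a(\om)s;\om)}$ degenerates differently at $s=0$ for $\om>0$ versus $\om=0$ (logarithmic versus algebraic blow-up of $F(\tau;\om)$ as $\tau\downarrow0$, as displayed above \eqref{eq:2.4}), so one must check that this change in endpoint behavior does not disrupt convergence of $G(z;\om)$ at \emph{finite} $z$. The remedy is that for fixed $z$ the relevant value $\tau=G(z;\om)$ stays bounded away from $0$, so only the well-behaved part of the integrand is involved and a dominated-convergence or monotone-inversion argument closes the limit.
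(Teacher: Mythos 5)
Your proof is correct and follows essentially the same route as the paper: separate variables in \eqref{eq:2.3}, integrate on $[0,x]$ with the change of variables $\phi_\om(y)^2=a(\om)s$ to identify the resulting integral with $F$, and invert via $G=F^{-1}$ to obtain \eqref{eq:2.5}. For the limit $\om\downarrow0$ the paper simply invokes the continuity of $a(\om)$ and $G(z;\om)$; your monotone-inversion argument, which keeps $\tau=G(z;\om)$ bounded away from the degenerate endpoint $\tau=0$ for fixed $z$, is a correct and somewhat more explicit justification of exactly that continuity.
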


\begin{proof}
By dividing \eqref{eq:2.3} on both sides by $-\sqrt{W(\phi_\om(x)^2; \om)}$, integrating on $[0,x]$, and the change of the variables $\phi_\om(y)^2=a(\om)s$, we obtain
\begin{align*}
  x&=\int_{0}^{x}\frac{-\phi_\om'(y)}{\sqrt{W(\phi_\om^2(y);\om)}}dy
  =\frac{\sqrt{a(\om)}}{2}\int_{\phi_\om^2(x)/a(\om)}^{\phi_\om^2(0)/a(\om)}\frac{ds}{\sqrt{s W(a(\om)s; \om)}}.
\end{align*}
Since $\phi_\om(0)^2=a(\om)$, we have 
\[b(\om) x
  =F\left(\frac{\phi_\om(x)^2}{a(\om)}; \om\right), \]
which yields \eqref{eq:2.5}. The latter claim easily follows from the continuities of $a(\omega)$ and $G(z;\omega)$.
\end{proof}
The formula \eqref{eq:2.5} gives a simple proof of the sharp decay of $\phi_0(x)$ as $x\to\infty$.
\begin{lemma}
\label{lem:2.3}
$\phi_0(x)\sim x^{-2/(p-1)}$ as $x\to\infty$. 
\end{lemma}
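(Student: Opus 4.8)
The plan is to read off the decay of $\phi_0$ directly from the explicit formula \eqref{eq:2.5} in Lemma \ref{lem:2.2}. Setting $\om=0$ there gives
\[
  \phi_0(x)^2=a(0)\,G(b(0)x;0),
\]
so the asymptotics of $\phi_0$ at infinity are governed entirely by the behavior of $G(\cdot;0)=F^{-1}(\cdot;0)$ as its argument tends to $+\infty$. Since $\tau\mapsto F(\tau;0)$ is a strictly decreasing bijection from $(0,1]$ onto $[0,\infty)$ (as established around \eqref{eq:2.4}), the regime $z\to\infty$ for $G$ corresponds exactly to $\tau\downarrow0$ for $F$, and the latter has already been recorded in the text.

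First I would recall the asymptotic $F(\tau;0)\sim\tau^{-(p-1)/4}$ as $\tau\downarrow0$. This comes from the lowest-order term of $W$ near the origin: since $p<q$, one has $W(a(0)s;0)\sim\frac{2}{p+1}(a(0)s)^{(p+1)/2}$ as $s\downarrow0$, so the integrand $\bigl(s\,W(a(0)s;0)\bigr)^{-1/2}$ behaves like a constant multiple of $s^{-(p+3)/4}$. Because $p>1$ this power is nonintegrable at $0$ (but integrable near $s=1$), and integrating over $(\tau,1)$ produces a blow-up at the rate $\tau^{-(p-1)/4}$ with a positive constant prefactor $c_0$; that is, $F(\tau;0)/\tau^{-(p-1)/4}\to c_0$.

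Next I would invert this power-law asymptotic. Writing $z=F(\tau;0)\sim c_0\,\tau^{-(p-1)/4}$ and solving for $\tau=G(z;0)$ yields
\[
  G(z;0)\sim c_0^{4/(p-1)}\,z^{-4/(p-1)}\quad\text{as }z\to\infty.
\]
Substituting into the displayed identity gives $\phi_0(x)^2\sim C\,x^{-4/(p-1)}$ for a suitable constant $C>0$, and taking square roots produces $\phi_0(x)\sim\sqrt{C}\,x^{-2/(p-1)}$, which is the claim.

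The only point requiring care — the main obstacle, such as it is — is justifying the passage from the asymptotic of $F$ to that of its inverse $G$. This is the standard fact that inversion turns $F(\tau;0)\sim c_0\tau^{-\alpha}$ into $G(z;0)\sim c_0^{1/\alpha}z^{-1/\alpha}$ with $\alpha=(p-1)/4$, and it follows from the monotonicity of $F(\cdot;0)$ together with an elementary squeeze once the limit $F(\tau;0)/\tau^{-(p-1)/4}\to c_0$ is in hand. Everything else is a direct substitution into \eqref{eq:2.5}.
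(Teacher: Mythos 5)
Your proposal is correct and follows essentially the same route as the paper: both read off the decay from the formula $\phi_0(x)^2=a(0)G(b(0)x;0)$ of Lemma \ref{lem:2.2}, use $F(\tau;0)\sim\tau^{-(p-1)/4}$ as $\tau\downarrow0$ to deduce $G(z;0)\sim z^{-4/(p-1)}$ as $z\to\infty$, and substitute. Your write-up merely spells out two details the paper leaves implicit (the derivation of the $F$-asymptotic from the behavior of $W$ near $s=0$, and the justification for inverting a power-law asymptotic), which is fine.
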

\begin{proof}
Since $F(\tau; 0)\sim \tau^{-(p-1)/4}$ as $\tau\downarrow 0$, we have
$
G(z; 0){\sim} z^{-4/(p-1)}
$
as $z\to\infty$. From \eqref{eq:2.5}, we obtain 
\begin{align*}
\phi_0(x)\sim G(b(0)x; 0)^{1/2}\sim x^{-2/(p-1)}
\end{align*}
as $x\to\infty$. 
\end{proof}
Differentiating \eqref{eq:2.5} with respect to $\om$, we have 
\begin{equation}\label{eq:2.6}
  \begin{aligned}
   &2\eta_\omega(x)\phi_\omega(x)
\\ &\quad 
    =a'(\om)G(b(\om)x; \om)
    +a(\om)b'(\om)xG_z(b(\om)x; \om)
    +a(\om)G_\om(b(\om)x; \om).
  \end{aligned}
\end{equation}
We now check the definedness of each term in the RHS of \eqref{eq:2.6} with $\om=0$. Since $a\in C^\infty([0, \infty))$ and $a(\om)>0$, $a'(0)$ and $b'(0)$ are defined. By \eqref{eq:2.4}, we see that $z\mapsto G(z; \om)$ is differentiable for $z>0$ and $\om\ge 0$ and 
\begin{align}\label{eq:2.7}
    G_z(z; \om)
   &=\frac{1}{F_\tau(G(z; \om); \om)}
    =-\sqrt{G(z; \om)W(a(\om)G(z; \om); \om)},
\end{align}
whose rightmost side is still defined with $z=0$. The formula
\begin{equation*}
  a'(\om)
  =-\frac{W_\om(a(\om); \om)}{W_s(a(\om); \om)}
  =-\frac{a(\om)}{W_s(a(\om); \om)}
\end{equation*} 
and a direct calculation yield that 
\begin{align*}
    \pt_\om \frac{1}{\sqrt{s W(a(\omega)s; \omega)}}
   &=\frac{\{a'(\om)W_s(a(\om)s; \om)+a(\om)\}\sqrt{s}}{2W(a(\om)s; \om)^{3/2}} 
\\ &=\frac{a(\om)\{-W_s(a(\om)s; \om)+W_s(a(\om); \om)\}\sqrt{s}}{2W_s(a(\om); \om)W(a(\om)s; \om)^{3/2}}.
\end{align*}

We note that for any $\tau\in(0, 1]$ there exist $\delta>0$ and $C>0$ such that 
\begin{equation} \label{eq:n2.8}
    \left|\frac{a(\om)\{-W_s(a(\om)s; \om)+W_s(a(\om); \om)\}\sqrt{s}}{2W_s(a(\om); \om)W(a(\om)s; \om)^{3/2}}\right|
    \le \frac{C}{\sqrt{1-s}}
\end{equation}
for all $\omega\in[0, \delta]$ and $s\in[\tau, 1)$. Indeed, by $W_s(a(0); 0)<0$ and the continuity of $(s, \omega)\mapsto W_s(s; \omega)$, there exists $\delta>0$ such that $-W_s(a(\omega)\theta; \omega)\ge\delta$ for all $\theta\in[1-\delta, 1]$ and $\omega\in[0, \delta]$. This implies that 
\begin{align*}
    W(a(\omega)s; \omega)
    =-a(\omega)\int_s^1W_s(a(\omega)\theta; \omega)\,d\theta
    \gtrsim 1-s
\end{align*}
for $s\in[1-\delta, 1]$ and $\omega\in[0, \delta]$. Moreover, by the boundedness of $(\theta, \omega)\mapsto W_{ss}(a(\omega)\theta; \omega)$ on $[1-\delta, 1]\times[0, \delta]$, we obtain
\[  \lvert-W_s(a(\om)s; \om)+W_s(a(\om); \om)\rvert
    \lesssim 1-s  \]
for $s\in[1-\delta, 1]$ and $\omega\in[0, \delta]$. Therefore, \eqref{eq:n2.8} holds for all $s\in[1-\delta, 1)$ and $\omega\in[0, \delta]$. 
One can easily see that \eqref{eq:n2.8} holds for $s\in[\tau, 1-\delta]$ and $\omega\in[0, \delta]$ because $(s, \omega)\mapsto W(a(\omega)s; \omega)$ has the positive minimum on $[\tau, 1-\delta]\times[0, \delta]$. 

By \eqref{eq:n2.8} and the dominated convergence theorem we deduce that the partial derivative $F_\om(\tau; \om)$ exists for $\tau\in(0, 1]$ and $\om\ge0$, and that
\begin{align*}
  F_\om(\tau; \om)
  =-\frac{a(\om)}{2W_s(a(\om); \om)}\int_\tau^1\frac{\{-W_s(a(\om)s; \om)+W_s(a(\om); \om)\}\sqrt{s}}{W(a(\om)s; \om)^{3/2}}\,ds.
\end{align*}
Moreover, by differentiating $F(G(z; \omega); \omega)=z$ with respect to $\omega$, we have
\begin{align}
\label{eq:2.8}
G_\omega(z; \omega) 
  =-\frac{F_\omega(G(z; \omega) ;\omega)}{F_\tau(G(z; \omega); \omega)}.
\end{align}
From this we see that the partial derivative $G_\om(z; \om)$ is defined for $z\ge0$ and $\om\ge0$.

Now, by passing the limit $\omega\downarrow0$ in \eqref{eq:2.6}, we can define the function $\eta_0$ by
\begin{align}
\label{eq:2.9}
    \eta_0(x)=\lim_{\omega\downarrow0}\eta_\omega(x)
    =\frac{1}{2\phi_0(x)}\{
  \begin{aligned}[t]
   &a'(0)G(b(0)x; 0)
\\ &+a(0)b'(0)xG_z(b(0)x; 0)
    +a(0)G_\om(b(0)x; 0)\}
  \end{aligned}
\end{align}
for $x\in\R$. It follows from this formula that $\eta_0\in C(\R)$.

Next, we prepare to investigate the asymptotic behavior of $\eta_0$ as $x\to\infty$.
\begin{lemma}
\label{lem:2.4}
For $x\in\R$, we have 
\begin{align}\label{eq:2.10}
  G_z(b(0)x; 0)
  &\sim -\phi_0(x)^{(p+3)/2},
\\\label{eq:2.11}
  G_\om(b(0)x; 0)
  &\sim -\phi_0(x)^{-p+3}
\end{align}
as $x\to\infty$. 
\end{lemma}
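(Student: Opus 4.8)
The plan is to read off both asymptotics directly from the closed-form expressions \eqref{eq:2.7} and \eqref{eq:2.8} for $G_z(\cdot;\om)$ and $G_\om(\cdot;\om)$, evaluated at $z=b(0)x$ and $\om=0$. The crucial reduction is that, by \eqref{eq:2.5} and Lemma \ref{lem:2.3}, the argument $\tau\ce G(b(0)x; 0)=\phi_0(x)^2/a(0)$ tends to $0$ as $x\to\infty$, so everything is governed by the behavior of $G_z(\cdot;0)$ and $G_\om(\cdot;0)$ near the origin. The only inputs needed are the origin asymptotics of $W$ and its derivative, immediate from the definitions: as $u\downarrow0$ one has $W(u; 0)\sim\frac{2}{p+1}u^{(p+1)/2}$ and $W_s(u; 0)=f(u)\sim u^{(p-1)/2}$, while $W_s(a(0); 0)<0$ is a fixed negative constant by \eqref{eq:2.2}. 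Finally, since $\tau$ is a positive constant times $\phi_0(x)^2$, any power $\tau^\alpha$ translates into a constant times $\phi_0(x)^{2\alpha}$, which converts the rates below into the claimed estimates.

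For \eqref{eq:2.10} the computation is direct. Substituting into \eqref{eq:2.7} gives $G_z(b(0)x; 0)=-\sqrt{\tau\, W(a(0)\tau; 0)}$, and inserting $W(a(0)\tau; 0)\sim\frac{2}{p+1}(a(0)\tau)^{(p+1)/2}$ yields $G_z(b(0)x; 0)\sim -c\,\tau^{(p+3)/4}$ with $c>0$. Since $\tau^{(p+3)/4}$ is a positive constant times $\phi_0(x)^{(p+3)/2}$, this is exactly \eqref{eq:2.10}, the negative sign coming from the square root in \eqref{eq:2.7}.

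For \eqref{eq:2.11} I would use \eqref{eq:2.8}, $G_\om(b(0)x; 0)=-F_\om(\tau; 0)/F_\tau(\tau; 0)$, and determine the two factors as $\tau\downarrow0$. The denominator is immediate from \eqref{eq:2.4} and the asymptotics of $W$: $F_\tau(\tau; 0)\sim -c'\tau^{-(p+3)/4}$ with $c'>0$. The numerator is the genuine work. In the integral formula for $F_\om(\tau; 0)$, the integrand behaves near $s=0$ like $\frac{W_s(a(0); 0)\sqrt{s}}{W(a(0)s; 0)^{3/2}}\sim (\text{const})\,s^{-(3p+1)/4}$, a non-integrable singularity because $p>1$; hence the integral diverges as $\tau\downarrow0$ with leading behavior $\int_\tau^1(\cdots)\,ds\sim(\text{const})\,\tau^{3(1-p)/4}$. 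Multiplying by the prefactor $-a(0)/(2W_s(a(0); 0))$ gives $F_\om(\tau; 0)\sim(\text{const})\,\tau^{3(1-p)/4}$. Taking the ratio, the powers combine to $3(1-p)/4+(p+3)/4=(3-p)/2$, so $G_\om(b(0)x; 0)\sim -c''\,\tau^{(3-p)/2}$ with $c''>0$, which is \eqref{eq:2.11} after replacing $\tau^{(3-p)/2}$ by a constant times $\phi_0(x)^{3-p}$.

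The main obstacle is the asymptotic analysis of the integral defining $F_\om(\tau; 0)$. One must justify that the leading singular term of the integrand dictates the divergence rate; the delicate point is that the subleading corrections (coming from the $u^{(p-1)/2}$ leading term of $W_s(a(0)s;0)$ and the $u^{(q+1)/2}$ term of $W$) are themselves non-integrable at $s=0$ when $p>1$, so I must verify that they integrate to a strictly weaker divergence—of order $\tau^{(1-p)/4}$—which is negligible against $\tau^{3(1-p)/4}$ since $1-p<0$. A secondary but essential point is to track signs throughout, using $W_s(a(0); 0)<0$ to confirm that the coefficient of $F_\om$ is genuinely positive, so that the limiting constants in the relations $\sim$ are positive and \eqref{eq:2.10}, \eqref{eq:2.11} hold with the stated negative signs.
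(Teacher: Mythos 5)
Your proposal is correct and takes essentially the same route as the paper: both read \eqref{eq:2.10} off from \eqref{eq:2.7}, \eqref{eq:2.5} and $W(s;0)\sim s^{(p+1)/2}$, and both derive \eqref{eq:2.11} by showing that the integrand in the formula for $F_\om(\tau;0)$ behaves like $W_s(a(0);0)\,s^{-(3p+1)/4}$ near $s=0$, hence $F_\om(\tau;0)\sim-\tau^{-3(p-1)/4}$, and then invoking \eqref{eq:2.8}. The only cosmetic difference is that you divide by the asymptotics of $F_\tau$ directly, while the paper writes the same ratio as $-F_\om\cdot G_z$ and reuses \eqref{eq:2.10}; your explicit check that the non-integrable subleading terms contribute only $O(\tau^{-(p-1)/4})$ is a detail the paper leaves implicit.
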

\begin{proof}
From \eqref{eq:2.7}, we have
\begin{align*}
    G_z(z; 0)
    =-\sqrt{G(z; 0)W(a(0)G(z; 0); 0)}.
\end{align*}
Combined with \eqref{eq:2.5} and $W(s; 0)\sim s^{(p+1)/2}$ as $s\downarrow 0$, this yields \eqref{eq:2.10}. 

From $ W(a(0)s; 0)^{3/2}\sim s^{3(p+1)/4}$ as $s\downarrow0$
and $-\frac{3}{4}p-\frac{1}{4}<-1$, we obtain
\begin{align*}
    F_\omega(\tau; 0)\sim
    -\int_\tau^1s^{-\frac{3}{4}p-\frac{1}{4}}\,ds 
    \sim -\tau^{-3(p-1)/4}\quad\text{as }\tau\downarrow 0.
  \end{align*}
From this and \eqref{eq:2.5}, we obtain
\[  F_\om(G(b(0)x;0); 0)
    \sim -G(b(0)x;0)^{-3(p-1)/4}
    \sim -\phi_0(x)^{-3(p-1)/2}.  \] 
Combined with \eqref{eq:2.8} and \eqref{eq:2.10}, this yields that
\begin{align*}
    G_\om(b(0)x; 0)
    &=-\frac{F_\omega(G(b(0)x; 0) ;0)}{F_\tau(G(b(0)x; 0); 0)}
\\ &=-F_\omega(G(b(0)x; 0) ;0)G_z(b(0)x; 0)
\sim-\phi_0(x)^{-p+3}
\end{align*}
as $x\to\infty$, which completes the proof of \eqref{eq:2.11}.
\end{proof}

We now complete the proof of Theorem~\ref{thm:1.2}.
\begin{proof}[Proof of Theorem~\ref{thm:1.2}]
We first prove the sharp decay/growth estimate \eqref{eq:1.13}. 
It follows from \eqref{eq:2.5}, \eqref{eq:2.6}, \eqref{eq:2.10}, and \eqref{eq:2.11} that
\begin{equation} 
\label{eq:2.12}
    \eta_0(x)
    \underset{x\to\infty}{\sim} a'(0)\phi_0(x)
    -b'(0)x\phi_0(x)^{(p+1)/2}
    -a(0)\phi_0(x)^{-p+2}.
\end{equation}
From the sharp decay of $\phi_0(x)$, the first and second terms of the RHS are estimated as
\begin{align*}
  \phi_0(x)
  \sim x\phi_0(x)^{(p+1)/2}
  \sim x^{-2/(p-1)},
\end{align*}
and the third term is estimated as
\begin{align}
\label{eq:2.13}
   &\phi_0(x)^{-p+2}
    \sim x^{-2/(p-1)+2}.
\end{align}
This means that the third term is the leading term of $\eta_0$ for large $x>0$. Therefore, \eqref{eq:1.13} follows from \eqref{eq:2.12}, $a(0)>0$, and \eqref{eq:2.13}.

Next, we prove \eqref{eq:1.14}. We note that $\partial_\om(\phi_\om')=\eta_\om'$ for $\om>0$ and $x\in \R$ because $(x,\om)\mapsto \phi_\om(x)$ is $C^2$ on $(0,\infty)\times \R$. Therefore, by using the formula \eqref{eq:2.3}, we have
\begin{align*}
  \eta_\om(x)
  -\eta_\om(\delta)
  &=\int_\delta^x\eta_\om'(y)\,dy
  =\int_\delta^x\partial_\om(\phi_\om'(y))\,dy
\\
 &=\int_\delta^x\frac{2W_s(\phi_\om(y)^2; \om)\phi_\om(y)\eta_\om(y)+\phi_\om(y)^2}{2\sqrt{W(\phi_\om(y)^2; \om)}}\,dy
\end{align*}
for $\om>0$ and $x>\delta>0$.\footnote{The integrand contains the indeterminate form $0/0$ with $y=0$, so we avoid the origin in the integral by taking small $\delta>0$ for $x>0$.}
Passing the limit $\om\downarrow0$, we obtain
\begin{align*}
  \eta_0(x)
  -\eta_0(\delta)
  =\int_\delta^x\frac{2W_s(\phi_0(y)^2; 0)\phi_0(y)\eta_0(y)+\phi_0(y)^2}{2\sqrt{W(\phi_0(y)^2; 0)}}\,dy.
\end{align*}
This implies that $\eta_0\in C^1(\R\setminus\{0\})$ and 
\begin{equation} \label{eq:2.14}
  \eta_0'(x)
  =\frac{1}{2\sqrt{W(\phi_0(x)^2;0)}}\{2W_s(\phi_0(x)^2; 0)\phi_0(x)\eta_0(x)
  +\phi_0(x)^2\}\quad\text{for}~x>0.
\end{equation}
From the fact $\eta_0(x)\sim{-}\phi_0(x)^{-p+2}$~$(x\to\infty)$, we obtain 
\begin{align}
\label{eq:2.15}
\begin{aligned}
  &W(\phi_0(x)^2; 0)
  \sim \phi_0(x)^{p+1}
  \sim x^{-2}\phi_0(x)^2,
\\
  &W_s(\phi_0(x)^2; 0)\phi_0(x)\eta_0(x)
  \sim -\phi_0(x)^2
  \end{aligned}
\end{align} 
as $x\to\infty$. 
Combined with \eqref{eq:2.14}, this yields that
\begin{align*}
\eta'_0(x)=O(x\phi_0(x))=O(x^{-2/(p-1)+1})\quad(x\to\infty).
\end{align*}

Finally we prove that 
\begin{align}
\label{eq:2.16}
\eta_0\in C^\infty(\R)\quad\text{and}\quad L_0\eta_0=-\phi_0\quad\text{in}~\R.
\end{align}
It follows from the argument of the second paragraph in this proof that
\begin{align*}
\eta_0\in C^2(\R\setminus\{0\}),
\quad
\eta_\omega''(x)\underset{\omega\downarrow0}{\to}\eta_0''(x)
\quad
\text{for}~x\in\R\setminus\{0\}.  
\end{align*}
We recall the fact
\begin{align}
\label{eq:2.17}
\phi_\omega(x)\to\phi_0(x), ~\eta_\omega(x)\to\eta_0(x)\quad\text{for all}~ x\in\R.
\end{align}
Since $\eta_\om$ with $\omega>0$ satisfies
\begin{align}
\label{eq:2.18}
L_\om\eta_\om=-\eta_\om''+\om\eta_\om+p\phi_\om^{p-1}\eta_\om-q\phi_\om^{q-1}\eta_\om=-\phi_\om\quad\text{in}~\R,
\end{align}
by passing the limit $\omega\downarrow0$, we obtain 
\begin{align}
\label{eq:2.19}
L_0\eta_0=-\eta_0''+p\phi_0^{p-1}\eta_0-q\phi_0^{q-1}\eta_0=-\phi_0
\quad\text{in}~\R\setminus\{0\}.
\end{align}

As a key step in the proof, we need to check the differentiability of $\eta_0$ at $x=0$. We note from the formulas \eqref{eq:2.5} and \eqref{eq:2.6} that $\{\phi_\om\}_{0<\om<1}$ and $\{\eta_\om\}_{0<\om<1}$ are bounded in $C([0,1])$.
Combined with \eqref{eq:2.18}, this yields that 
$\{\eta_\om''\}_{0<\om<1}$ is bounded in $C([0,1])$. We set
\begin{align*}
    C_0\ce 
    \sup\{\abs[\eta_\om''(x)]: x\in[0,1],\; \omega\in(0,1)\}.
\end{align*}
We note that $\eta_\om'(0)=0$ for $\om>0$, which follows from the fact $\phi_\om'(0)=0$ for all $\om\ge0$. Using the mean value theorem, we deduce that for $x,\om \in(0,1)$ there exist $\theta_1=\theta_1(x;\om)$ and $\theta_2=\theta_2(x;\omega)\in(0,1)$ such that 
\begin{align*}
\abs[ \frac{\eta_\om(x)-\eta_\om(0)}{x} ]
&=\abs[\eta_\om'(\theta_1x)]
\\
&=\abs[\eta_\om'(\theta_1x)-\eta_\om'(0)]
\\
&=\abs[\eta_\om''(\theta_1\theta_2x)]\abs[\theta_1x]\le C_0|x|.
\end{align*}
Passing the limit $\om\downarrow0$, we obtain
\begin{align*}
\abs[ \frac{\eta_0(x)-\eta_0(0)}{x} ]\le C_0|x|.
\end{align*}
This yields that $\eta_0$ is differentiable at $x=0$ and $\eta_0'(0)=0$.

We now complete the proof of \eqref{eq:2.16}. We deduce from the equation \eqref{eq:2.19} that the limit $\eta_0''(x)$ as $x\to0$ exists. Combined with $\eta_0\in C^1(\R)$, this yields that
\begin{align*}
\eta_0\in C^2(\R)\quad\text{and}\quad L_0\eta_0=-\phi_0\quad\text{in}~\R.
\end{align*}
We conclude from a standard bootstrap argument that $\eta_0\in C^\infty(\R)$. 
\end{proof}
\begin{remark}
\label{rem:2.5}
From \eqref{eq:2.15}, the two terms on the RHS of \eqref{eq:2.14} may cancel each other out as $x\to\infty$. Although we do not need the sharp decay/growth estimate of $\eta_0'$ to construct the unstable direction, our estimates of $\eta_0'$ would be sharp except for the case $p=2$.
\end{remark}
\begin{remark}
\label{rem:2.6}
We avoid the direct calculation of the derivatives of $\eta_0$ in our proof for the smoothness because the derivatives based on the formula \eqref{eq:2.9} always contain the indeterminate form $0/0$ at $x=0$. 
\end{remark}

\section{The link between $M'(0)$ and $\eta_0$}
\label{sec:3}

To bridge our analysis and the results of \cite{H21pre, KLT22}, we need to establish the link between $M'(0)$ and $\eta_0$.
The claim 
\begin{equation}
\label{eq:3.1}
  M'(0)
  =\int_\R\phi_0\eta_0\,dx
\end{equation}
is naturally expected from the formula
\begin{align*}
M'(\om)=\int_{\R} \phi_\omega\eta_\omega\, dx \quad \text{for}~\omega>0
\end{align*}
and \eqref{eq:2.17}, but it is actually a nontrivial problem. 
From our construction of $\eta_0$, we do not know the existence of dominated functions for $\{\eta_\omega\}_{0<\omega<1}$ over $\R$. 
In fact, it seems difficult to provide the dominated function for general $p$. Here for the proof of \eqref{eq:3.1}, we use the nontrivial formula of $M'(0)$ in \cite{IK93}.
\begin{theorem}\label{thm:3.1}
If $p<7/3$, $x\mapsto(\phi_0\eta_0)(x)$ is integrable over $\R$ and \eqref{eq:3.1} holds. If $p\ge7/3$, the both sides of \eqref{eq:3.1} equal ${-}\infty$.
\end{theorem}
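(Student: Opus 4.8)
The plan is to collapse both sides of \eqref{eq:3.1} onto a single explicit quadrature and to let the sharp decay estimates of Theorem~\ref{thm:1.2} dictate the threshold $p=7/3$. I begin with the integrability dichotomy for the right-hand side. By Lemma~\ref{lem:2.3} and the sharp estimate \eqref{eq:1.13} one has $\phi_0(x)\eta_0(x)\sim -c\,x^{-4/(p-1)+2}$ as $x\to\infty$ for some $c>0$, and $-4/(p-1)+2<-1$ if and only if $p<7/3$. Hence $\phi_0\eta_0\in L^1(\R)$ when $p<7/3$, while for $p\ge7/3$ the integrand is negative with a non-integrable tail, so $\int_\R\phi_0\eta_0\,dx=-\infty$.

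Next I would produce the quadrature for $M$. Writing $M(\om)=\int_0^\infty\phi_\om^2\,dx$ and changing variables via $t=\phi_\om(x)^2$ using \eqref{eq:2.3}, then rescaling $t=a(\om)s$, gives
\[
  M(\om)=\frac12\int_0^{a(\om)}\frac{\sqrt t}{\sqrt{W(t;\om)}}\,dt=\frac{a(\om)^{3/2}}{2}\,H(\om),\qquad H(\om)\ce\int_0^1\frac{\sqrt s}{\sqrt{W(a(\om)s;\om)}}\,ds,
\]
which is the quadrature underlying \cite{IK93}. The bound $\partial_\om\{sW(a(\om)s;\om)\}^{-1/2}\lesssim(1-s)^{-1/2}$ established in Section~\ref{sec:2} (with the harmless extra factor $s$) shows, by dominated convergence, that $H$ is continuous at $\om=0$ for $p<5$, and that $H'$ extends continuously to $\om=0$ precisely when $p<7/3$: the integrand of $H'(\om)$ behaves like $s^{3(1-p)/4}$ as $s\downarrow0$, integrable exactly for $p<7/3$, with the $\om=0$ profile serving as a uniform dominator for small $\om>0$. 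Differentiating $M=\tfrac12 a^{3/2}H$ then yields
\[
  M'(0)=\lim_{\om\downarrow0}M'(\om)=\tfrac34\,a(0)^{1/2}a'(0)H(0)+\tfrac12\,a(0)^{3/2}H'(0),
\]
finite for $p<7/3$ and equal to $-\infty$ for $p\ge7/3$, in agreement with \eqref{eq:1.6}.

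It remains to identify $\int_\R\phi_0\eta_0\,dx$ with the same expression when $p<7/3$. Starting from the representation \eqref{eq:2.6} of $2\phi_0\eta_0$ and using evenness, $\int_\R\phi_0\eta_0\,dx=\int_0^\infty 2\phi_0\eta_0\,dx$; substituting $z=b(0)x$ produces three integrals, namely $\int_0^\infty G(z;0)\,dz=H(0)$, $\int_0^\infty zG_z(z;0)\,dz=-H(0)$ (integration by parts, the boundary term $zG(z;0)\sim z^{1-4/(p-1)}$ vanishing at infinity since $p<5$ and at the origin since $G(0;0)=1$), and $\int_0^\infty G_\om(z;0)\,dz=H'(0)$. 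Collecting these with $b(0)=2a(0)^{-1/2}$ and $b'(0)=-a(0)^{-3/2}a'(0)$ reproduces exactly $\tfrac34 a(0)^{1/2}a'(0)H(0)+\tfrac12 a(0)^{3/2}H'(0)=M'(0)$. For $p\ge7/3$ both sides equal $-\infty$ by the two preceding steps, so the proof is complete.

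The hard part is the identity $\int_0^\infty G_\om(z;0)\,dz=H'(0)$, that is, the exchange of $\partial_\om$ with $\int_0^\infty dz$ at $\om=0$, which is exactly the obstruction flagged by the absence of a dominating function for $\{\eta_\om\}$. By \eqref{eq:2.11} the tail $G_\om(z;0)$ decays like $z^{-2(3-p)/(p-1)}$, integrable precisely when $p<7/3$ --- the very same threshold controlling the finiteness of $H'(0)$ --- so the sharp decay/growth information for $\eta_0$, equivalently for $G_\om$, supplied by Theorem~\ref{thm:1.2} is what both legitimizes this exchange and pins down the critical exponent $7/3$.
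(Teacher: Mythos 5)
Your architecture is genuinely different from the paper's (which imports the nontrivial Iliev--Kirchev formula \eqref{eq:3.3} for $M'(\om)$ and matches it against $\int_\R\phi_0\eta_0\,dx$ via the same integration-by-parts/Fubini manipulations at $\om=0$): you instead differentiate the explicit quadrature $M(\om)=\tfrac12a(\om)^{3/2}H(\om)$ and evaluate $\int_\R\phi_0\eta_0\,dx$ termwise from the representation \eqref{eq:2.9}. The bookkeeping is correct: with $b'(0)=-a(0)^{-3/2}a'(0)$ the three pieces recombine to $\tfrac34a(0)^{1/2}a'(0)H(0)+\tfrac12a(0)^{3/2}H'(0)$, and the identities $\int_0^\infty G(z;0)\,dz=H(0)$ and $\int_0^\infty zG_z(z;0)\,dz=-H(0)$ are legitimate (single change of variables, respectively integration by parts, both performed at $\om=0$ with no limit interchange).

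However, there is a genuine gap exactly at the step you yourself flag as the hard part: the identity $\int_0^\infty G_\om(z;0)\,dz=H'(0)$. Your justification --- that $G_\om(\cdot\,;0)$ has an integrable tail precisely when $p<7/3$, the same threshold at which $H'(0)$ is finite --- is not a proof. Integrability of the \emph{limiting} integrand never by itself licenses interchanging $\partial_\om$ (equivalently $\lim_{\om\downarrow0}$) with $\int_0^\infty dz$; for that one needs a dominating function for the family $\{G_\om(z;\om)\}_{0<\om<1}$, i.e.\ for $\{\phi_\om\eta_\om\}$, over the unbounded domain, and the paper states explicitly that such a dominator is not known and seems difficult to produce for general $p$ --- this is precisely why Theorem~\ref{thm:3.1} is nontrivial and why the paper routes everything through the compact-variable formula \eqref{eq:3.3}. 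A coincidence of thresholds is suggestive, not probative. The gap is repairable inside your own framework: substitute $z=F(\tau;0)$ and use \eqref{eq:2.8} to rewrite $\int_0^\infty G_\om(z;0)\,dz=\int_0^1F_\om(\tau;0)\,d\tau$ (again a single change of variables at $\om=0$, no interchange), and then prove $H'(0)\ce\lim_{\om\downarrow0}H'(\om)=\int_0^1F_\om(\tau;0)\,d\tau$ by dominated convergence on the \emph{compact} $\tau$-domain, where the Section~\ref{sec:2} estimates supply the dominators $\tau^{3(1-p)/4}$ near $\tau=0$ (integrable exactly when $p<7/3$) and $(1-\tau)^{-1/2}$ near $\tau=1$, uniformly in small $\om$. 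Until this move is made, your proof has a hole at its central step. A smaller issue: for $p\ge7/3$ you assert $M'(0)=-\infty$ from the quadrature, but failure of the dominator to be integrable does not prove divergence; either run a Fatou-type argument on the sign-definite part of the integrand of $H'(\om)$ near $s=0$, or simply cite \eqref{eq:1.6} as the paper does.
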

\begin{proof}
From \eqref{eq:1.3} and \eqref{eq:1.13} we obtain
\begin{align*}
(\phi_0\eta_0)(x)\sim -|x|^{-4/(p-1)+2} \quad\text{as}~|x|\to\infty.
\end{align*}
This yields that
\begin{align*}
p\ge7/3 \iff \int_{\R}\phi_0\eta_0\, dx=-\infty.
\end{align*}
Hence, the latter claim follows from \eqref{eq:1.6}.

For the rest of the proof, we show that
\begin{equation}\label{eq:3.2}
p<7/3\implies
M'(\om) \to \int_\R\phi_0\eta_0\,dx\quad\text{as}~\omega\downarrow0.
\end{equation}
We now introduce the formula in \cite[Lemma~6]{IK93}
\begin{equation}\label{eq:3.3}
    M'(\om)
    =-\frac{1}{4W_s(a(\om); \om)}\int_0^{a(\om)}\frac{K(a(\om))-K(s)}{J(a(\om))-J(s)}\left(\frac{s}{W(s; \om)}\right)^{1/2}\,ds,
\end{equation}
where
\begin{align*}
  K(s)
  &\ce -\frac{5-p}{p+1}s^{(p-1)/2}+\frac{5-q}{q+1}s^{(q-1)/2},&
\\J(s)
  &\ce-\frac{2}{p+1}s^{(p-1)/2}+\frac{2}{q+1}s^{(q-1)/2}
  =-\frac{W(s; 0)}{s}.
\end{align*}
We note that
\begin{align*}
0=W(a(\om); \om)=\om a(\om)-a(\om)J(a(\om)),
\end{align*}
and so $W(s; \om)=s(J(a(\om))-J(s))$. Thus, we can rewrite \eqref{eq:3.3} as
\begin{align*}
    \notag
    M'(\om)
   &=-\frac{1}{4W_s(a(\om); \om)}\int_0^{a(\om)}\frac{K(a(\om))-K(s)}{(J(a(\om))-J(s))^{3/2}}\,ds
\\
   &=-\frac{a(\om)}{4W_s(a(\om); \om)}\int_0^{1}\frac{K(a(\om))-K(a(\om)s)}{(J(a(\om))-J(a(\om)s))^{3/2}}\,ds.
\end{align*}
It is easily verified that 
\begin{align*}
    &\abs[ \frac{K(a(\om))-K(a(\om)s)}{(J(a(\om))-J(a(\om)s))^{3/2}}]
    \lesssim \frac{1}{\sqrt{1-s}} \quad\text{for $s$ close to $1$},
\\
   &\frac{1}{(J(a(\om))-J(a(\omega)s))^{3/2}} \le \frac{1}{(-J(a(\omega)s))^{3/2}}
    \sim s^{-3(p-1)/4}
    \quad\text{for small }s>0
\end{align*}
uniformly in $\omega\in(0,1)$.
Therefore, by the dominated convergence theorem we obtain 
\begin{equation}\label{eq:3.4}
  \int_0^{1}\frac{K(a(\om))-K(a(\om)s)}{(J(a(\om))-J(a(\om)s))^{3/2}}\,ds
  \to \int_0^{1}\frac{K(a(0))-K(a(0)s)}{(-J(a(0)s))^{3/2}}\,ds.
\end{equation}
On the other hand, by applying the same calculations as in the proof of \cite[Lemma 6]{IK93}, we obtain 
\begin{equation}
\label{eq:3.5}
  \int_\R\phi_0\eta_0\,dx
  =-\frac{1}{4W_s(a(0); 0)}\int_0^{1}\frac{K(a(0))-K(a(0)s)}{(-J(a(0)s))^{3/2}}\,ds.
\end{equation}
Indeed, in the proof therein the authors use integration by parts and Fubini's theorem, which still works for the case $\omega=0$ with $p<7/3$ in our setting because of the integrability.
Hence, combining \eqref{eq:3.3}, \eqref{eq:3.4}, and \eqref{eq:3.5}, we obtain \eqref{eq:3.2}. This completes the proof.
\end{proof}
\begin{remark}
\label{rem:3.2}
As can be seen in the proof, the limit of $M'(\om)$ as $\om\to0$ is calculated through the nontrivial formula \eqref{eq:3.3}, which is derived by the calculation heavily depending on the one-dimensional case. In the high-dimensional case, the existence of the limit itself becomes a nontrivial problem. In \cite{LN20} the authors deal with this problem for another type of double power nonlinearities
and they use the resolvent of the linearized operator around the stationary solution. However, in our case the problem is more delicate from the viewpoint of linear Schr\"odinger theory because our linearized operator contains the potential of critical decay. 
\end{remark}

\section{Construction of unstable directions}
\label{sec:4}

In this section we assume that $d=1$ and $1<p<q<5$. 
Based on the function $\eta_0$ constructed in Section \ref{sec:2}, we construct the unstable direction and prove Theorem~\ref{thm:1.3}. 
One can see from our proof that the sharp decay/growth estimate of $\eta_0$ plays an essential role. We remark that our arguments in this section also work for high-dimensional cases. 


First we introduce the cut-off function. Let $\chi\in C_c^\infty(\R)$ be an even function satisfy $0\le \chi\le 1$ and
\begin{align*}
    \chi(x) 
    =\begin{cases}
    1 & \text{if }|x|\le 1,
\\  0 & \text{if }|x|\ge 2.
    \end{cases}
\end{align*}
For $R>0$ we set $\chi_R(x)\ce \chi(x/R)$. 
We define the function $\psi_R$ on $\R$ by
\begin{align} 
\label{eq:4.1}
  \psi_R
  &=\phi_0+\beta_R\chi_R\eta_0,
  \quad
  \text{where}~\beta_R\ce -\frac{\|\phi_0\|_{L^2}^2}{(\phi_0, \chi_R\eta_0)_{L^2}}. 
\end{align}
We note that 
\begin{align}\label{eq:4.2}
  &\psi_R\in H_{\rad}^1(\R)
 \quad\text{and}\quad 
  (\psi_R, \phi_0)_{L^2}
  =0. 
\end{align}
The main result in this section is the following, which actually contains more information than Theorem \ref{thm:1.3}.
\begin{theorem}
\label{thm:4.1}
Let $d=1$ and $1<p<q<5$. Assume further that $M'(0)\neq0$. 
Then we have
\begin{alignat}{2}
\label{eq:4.3}
&\text{if}~p<7/3, &\quad&\lim_{R\to\infty}\la L_0\psi_R, \psi_R\ra 
  =\la L_0\phi_0, \phi_0\ra
  +\frac{\|\phi_0\|_{L^2}^4}{M'(0)},
\\
\label{eq:4.4}
&\text{if}~p\ge7/3, &&\lim_{R\to\infty}\la L_0\psi_R, \psi_R\ra 
  =\la L_0\phi_0, \phi_0 \ra.
\end{alignat}
\end{theorem}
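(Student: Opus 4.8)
The plan is to compute $\langle L_0 \psi_R, \psi_R \rangle$ by expanding the bilinear form using $\psi_R = \phi_0 + \beta_R \chi_R \eta_0$ and tracking each term as $R \to \infty$. Expanding bilinearly gives
\begin{align*}
\langle L_0 \psi_R, \psi_R \rangle
= \langle L_0 \phi_0, \phi_0 \rangle
+ 2\beta_R \langle L_0 \phi_0, \chi_R \eta_0 \rangle
+ \beta_R^2 \langle L_0 (\chi_R \eta_0), \chi_R \eta_0 \rangle,
\end{align*}
so the task reduces to analyzing the cross term and the quadratic term, and understanding the asymptotics of $\beta_R$. First I would analyze $\beta_R$: by \eqref{eq:4.1} its behavior is governed by $(\phi_0, \chi_R \eta_0)_{L^2}$, which converges to $\int_\R \phi_0 \eta_0 \, dx = M'(0)$ when $p < 7/3$ (by Theorem~\ref{thm:3.1}), so $\beta_R \to -\|\phi_0\|_{L^2}^2 / M'(0)$; when $p \ge 7/3$ the integral diverges to $-\infty$, forcing $\beta_R \to 0$. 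This dichotomy is precisely what separates the two cases \eqref{eq:4.3} and \eqref{eq:4.4}.

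Next I would handle the cross term. Since $L_0 \phi_0 = S_0''(\phi_0)\phi_0$ and, from the scaling structure $L_0 = -\pt_x^2 + p\phi_0^{p-1} - q\phi_0^{q-1}$, one computes $L_0 \phi_0$ explicitly; pairing against $\chi_R \eta_0$ and integrating by parts (moving $L_0$ onto $\phi_0$ is free since $L_0$ is self-adjoint on the compactly-supported-modified integrand) lets me write $\langle L_0 \phi_0, \chi_R \eta_0 \rangle = \langle L_0 \eta_0, \chi_R \phi_0 \rangle + (\text{derivative error terms from } \chi_R)$. Using \eqref{eq:1.12}, namely $L_0 \eta_0 = -\phi_0$, the main piece becomes $-(\phi_0, \chi_R \phi_0)_{L^2} \to -\|\phi_0\|_{L^2}^2$, while the error terms involve $\chi_R'$ and $\chi_R''$ supported on the annulus $R \le |x| \le 2R$, which I control using the sharp decay estimate $\phi_0 \sim x^{-2/(p-1)}$ and the growth/decay estimates \eqref{eq:1.13}--\eqref{eq:1.14} for $\eta_0$ and $\eta_0'$. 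The key quantitative point is that $|\chi_R'| \lesssim R^{-1}$, $|\chi_R''| \lesssim R^{-2}$, and each error integrand carries explicit powers of $x \sim R$, so I must check the exponent bookkeeping ensures these vanish as $R \to \infty$ in the range $1 < p < q < 5$.

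Then I would treat the quadratic term $\beta_R^2 \langle L_0(\chi_R \eta_0), \chi_R \eta_0 \rangle$. Expanding $L_0(\chi_R \eta_0) = \chi_R L_0 \eta_0 - 2\chi_R' \eta_0' - \chi_R'' \eta_0 = -\chi_R \phi_0 - 2\chi_R'\eta_0' - \chi_R''\eta_0$, I pair against $\chi_R \eta_0$. The leading contribution $-(\chi_R \phi_0, \chi_R \eta_0)_{L^2}$ combines with $\beta_R^2$; in the case $p < 7/3$ this produces, together with the cross term, the stated limit $\|\phi_0\|_{L^2}^4 / M'(0)$ after one verifies $\beta_R^2 (\chi_R \phi_0, \chi_R \eta_0) \to (\|\phi_0\|_{L^2}^2/M'(0))^2 \cdot M'(0)$. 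In the case $p \ge 7/3$, since $\beta_R \to 0$, I need the product $\beta_R^2 \langle L_0(\chi_R\eta_0), \chi_R\eta_0\rangle$ to vanish, which requires showing this inner product does not blow up faster than $\beta_R^{-2} \sim (\phi_0, \chi_R\eta_0)_{L^2}^2$ grows; this is where the sharp estimates are indispensable. The main obstacle I anticipate is the delicate $R \to \infty$ bookkeeping in the borderline regime $p \ge 7/3$: here both $\beta_R \to 0$ and the inner products can diverge, so I must extract the precise divergence rates of $(\phi_0, \chi_R\eta_0)_{L^2}$, $(\chi_R\phi_0, \chi_R\eta_0)_{L^2}$, and the error terms $\int \chi_R'\eta_0'\chi_R\eta_0$ and $\int \chi_R''\eta_0 \chi_R\eta_0$ as functions of $R$ and confirm that, after multiplication by $\beta_R^2$, everything collapses to leave only $\langle L_0\phi_0,\phi_0\rangle$. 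Matching these competing rates using \eqref{eq:1.13}, \eqref{eq:1.14}, and Lemma~\ref{lem:2.3} is the technical heart of the argument.
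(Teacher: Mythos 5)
Your proposal is correct and takes essentially the same route as the paper's own proof: the same bilinear expansion of $\la L_0\psi_R,\psi_R\ra$, the same Leibniz computation $L_0(\chi_R\eta_0)=-\chi_R''\eta_0-2\chi_R'\eta_0'-\chi_R\phi_0$ with $L_0\eta_0=-\phi_0$, the same dichotomy for $\beta_R$ via Theorem \ref{thm:3.1}, and the same rate-matching in the case $p\ge 7/3$. The bookkeeping you defer does close exactly as you anticipate: the annulus error terms are $O(R^{-4/(p-1)+1})$ and $O(R^{-4/(p-1)+3})$, while $\beta_R=O(h_p(R))$ with $h_p(R)=(\log R)^{-1}$ for $p=7/3$ and $h_p(R)=R^{-3+4/(p-1)}$ for $p>7/3$, so every product vanishes in the limit.
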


For the proof of Theorem \ref{thm:4.1}, we expand $\la L_0\psi_R, \psi_R\ra$ as
\begin{align}\label{eq:4.5}
  \la L_0\psi_R, \psi_R\ra
  =\la L_0\phi_0, \phi_0\ra
  +2\beta_R\la L_0(\chi_R\eta_0), \phi_0\ra
  +\beta_R^2\la L_0(\chi_R\eta_0),\chi_R\eta_0\ra.
\end{align}
From \eqref{eq:1.12} and Leibniz's rule we obtain 
\begin{equation*}
  L_0(\chi_R\eta_0)
  =-\chi_R''\eta_0
  -2\chi_R'\eta_0'
  -\chi_R\phi_0. 
\end{equation*}
Using this relation, we have 
\begin{align}
  \la L_0(\chi_R\eta_0), \phi_0\ra
  \label{eq:4.6}
  &=-\la\chi_R''\eta_0, \phi_0\ra
  -2\la\chi_R'\eta_0', \phi_0\ra
  -\la\chi_R\phi_0, \phi_0\ra,
\\
  \la L_0(\chi_R\eta_0), \chi_R\eta_0\ra
  \label{eq:4.7}
  &=-\la\chi_R''\eta_0, \chi_R\eta_0\ra
  -2\la\chi_R'\eta_0', \chi_R\eta_0\ra
  -\la\chi_R\phi_0, \chi_R\eta_0\ra.
\end{align}
We prepare the following lemma.
\begin{lemma}
The following estimates hold: 
\begin{align}
  \label{eq:4.8}
  &|\la\chi_R''\eta_0, \phi_0\ra|+|\la\chi_R'\eta_0', \phi_0\ra|
  =O(R^{-4/(p-1)+1}),
\\\label{eq:4.9}
  &|\la\chi_R''\eta_0, \chi_R\eta_0\ra|+|\la\chi_R'\eta_0', \chi_R\eta_0\ra|
  =O(R^{-4/(p-1)+3})
\end{align}
as $R\to\infty$. In particular, if $p<7/3$, the left-hand sides of \eqref{eq:4.8} and \eqref{eq:4.9} tend to $0$ as $R\to\infty$.
\end{lemma}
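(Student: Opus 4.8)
The plan is to localize every integral to the dyadic annulus $A_R\ce\{x\in\R: R\le|x|\le 2R\}$, which is precisely where the derivatives of the cut-off live. Since $\chi_R(x)=\chi(x/R)$ with $\chi\equiv1$ on $[-1,1]$ and $\mathrm{supp}\,\chi\subset[-2,2]$, we have $\mathrm{supp}\,\chi_R',\,\mathrm{supp}\,\chi_R''\subset A_R$ together with the scaling bounds $\|\chi_R'\|_{L^\infty}\lesssim R^{-1}$ and $\|\chi_R''\|_{L^\infty}\lesssim R^{-2}$. On $A_R$ one has $|x|\sim R$, so for $R$ large the sharp estimates of Lemma~\ref{lem:2.3} and Theorem~\ref{thm:1.2} apply uniformly and yield $|\phi_0(x)|\lesssim R^{-2/(p-1)}$, $|\eta_0(x)|\lesssim R^{-2/(p-1)+2}$, and $|\eta_0'(x)|\lesssim R^{-2/(p-1)+1}$ for all $x\in A_R$ (the evenness of $\phi_0$, $\eta_0$, $\chi_R$ lets me reduce to $x>0$, where the asymptotics are stated). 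Since $A_R$ has Lebesgue measure $\sim R$, each inner product collapses to a product of these scales.

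The rest is power-counting. For \eqref{eq:4.8} I would bound each factor by its size on $A_R$ and multiply by the measure $\sim R$, obtaining
\[
|\la\chi_R''\eta_0,\phi_0\ra|\lesssim R^{-2}\cdot R^{-2/(p-1)+2}\cdot R^{-2/(p-1)}\cdot R=R^{-4/(p-1)+1},
\]
and, with $\chi_R'$ in place of $\chi_R''$ and $\eta_0'$ in place of $\eta_0$,
\[
|\la\chi_R'\eta_0',\phi_0\ra|\lesssim R^{-1}\cdot R^{-2/(p-1)+1}\cdot R^{-2/(p-1)}\cdot R=R^{-4/(p-1)+1}.
\]
For \eqref{eq:4.9} the same mechanism, now with two factors of $\eta_0$ and $0\le\chi_R\le1$, gives
\[
|\la\chi_R''\eta_0,\chi_R\eta_0\ra|\lesssim R^{-2}\cdot (R^{-2/(p-1)+2})^2\cdot R=R^{-4/(p-1)+3}
\]
and
\[
|\la\chi_R'\eta_0',\chi_R\eta_0\ra|\lesssim R^{-1}\cdot R^{-2/(p-1)+1}\cdot R^{-2/(p-1)+2}\cdot R=R^{-4/(p-1)+3},
\]
which are exactly the claimed orders.

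For the final assertion I observe that the exponent in \eqref{eq:4.8} satisfies $-4/(p-1)+1<0\iff p<5$, which holds throughout this section, whereas the exponent in \eqref{eq:4.9} satisfies $-4/(p-1)+3<0\iff p<7/3$; hence $p<7/3$ is the binding condition forcing both left-hand sides to vanish as $R\to\infty$. I do not expect a genuine obstacle here, since the argument is pure scaling. The only point requiring care is that the asymptotic estimates of Theorem~\ref{thm:1.2} be applied \emph{uniformly} over all of $A_R$; this is legitimate because $|x|\sim R\to\infty$ on the annulus, so each monomial $|x|^{\alpha}$ is comparable to $R^{\alpha}$ there. It is also worth noting that the threshold $p=7/3$ appearing in \eqref{eq:4.9} coincides with the integrability threshold for $\phi_0\eta_0$ from Theorem~\ref{thm:3.1}, which explains why this is exactly where the estimate degenerates.
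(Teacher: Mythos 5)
Your proof is correct and takes essentially the same route as the paper's: restrict to the annulus $R\le|x|\le 2R$ supporting $\chi_R'$ and $\chi_R''$, use the scaling bounds $|\chi_R'|\lesssim R^{-1}$, $|\chi_R''|\lesssim R^{-2}$ together with the sharp asymptotics of $\phi_0$, $\eta_0$, $\eta_0'$, and power-count; your sup-times-measure bound is the same computation as the paper's $\int_R^{2R}r^\alpha\,dr\lesssim R^{\alpha+1}$. Your check that $-4/(p-1)+1<0$ for $p<5$ while $-4/(p-1)+3<0$ exactly when $p<7/3$ also matches the paper's concluding claim.
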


\begin{proof}
We note that the supports of $\chi_R'$ and $\chi_R''$ are contained in the set $\{R\le |x|\le 2R\}$, and 
\begin{align*}
  \chi_R'(x)
  &=\frac{1}{R}\chi'(x/R), 
  \quad
  \chi_R''(x)
  =\frac{1}{R^2}\chi''(x/R).
\end{align*}
In the calculations below, we use the following estimate: 
\begin{align*}
  \int_R^{2R}r^\alpha\,dr 
  &\le 
  \begin{cases}
 \int_R^{2R}(2R)^\alpha\,dr&
  (\text{if}~\alpha\ge 0)
\\[3pt]
\int_R^{2R}R^\alpha\,dr&
(\text{if}~\alpha<0)  
  \end{cases}
\\&\lesssim R^{\alpha+1}\quad 
  \text{for }R>0. 
\end{align*}
From \eqref{eq:1.3}, \eqref{eq:1.13}, and \eqref{eq:1.14}, we have
\begin{align*}
(\eta_0\phi_0)(x)&=O(|x|^{-4/(p-1)+2}),
\quad
(\eta_0'\phi_0)(x)=O(|x|^{-4/(p-1)+1})
\end{align*}
as $|x|\to\infty$, so that 
\begin{align*}
   &|\la\chi_R''\eta_0, \phi_0\ra|
    \lesssim \frac{1}{R^2}\int_{R}^{2R}r^{-4/(p-1)+2}\,dr 
    \lesssim R^{-4/(p-1)+1},
\\ &|\la\chi_R'\eta_0', \phi_0\ra|
    \lesssim \frac{1}{R}\int_{R}^{2R}r^{-4/(p-1)+1}\,dr
    \lesssim R^{-4/(p-1)+1}
\end{align*}
for large $R>0$, which yields \eqref{eq:4.8}. 

Similarly, from \eqref{eq:1.13} and \eqref{eq:1.14} we have
\begin{align*}
\eta_0(x)^2=O(|x|^{-4/(p-1)+4}),
\quad
(\eta_0'\eta_0)(x)=O(|x|^{-4/(p-1)+3})
\end{align*}
as $|x|\to\infty$, so that
\begin{align*}
   &|\la\chi_R''\eta_0, \chi_R\eta_0\ra|
    \lesssim \frac{1}{R^2}\int_{R}^{2R}r^{-4/(p-1)+4}\,dr 
    \lesssim R^{-4/(p-1)+3},
\\ &|\la\chi_R'\eta_0', \chi_R\eta_0\ra|
    \lesssim\frac{1}{R}\int_{R}^{2R}r^{-4/(p-1)+3}\,dr
    \lesssim R^{-4/(p-1)+3}
  \end{align*}
for large $R>0$, which yields \eqref{eq:4.9}. 
\end{proof}

\begin{proof}[Proof of Theorem \ref{thm:4.1}]
We divide the proof into two cases.
\\[3pt]
{\bf Case 1: $p<7/3$}. It follows from \eqref{eq:4.6}, \eqref{eq:4.8}, and 
$\phi_0\in L^2(\R)$ that
\begin{align*}
\la L_0(\chi_R\eta_0), \phi_0\ra
  \to-\|\phi_0\|_{L^2}^2 \quad (R\to\infty).
\end{align*}
Also, it follows from \eqref{eq:4.7}, \eqref{eq:4.9}, and Theorem \ref{thm:3.1} that 
\begin{align*}
\tbra[L_0(\chi_R\eta_0), \chi_R\eta_0]
  \to -\int_\R\phi_0\eta_0\,dx=-M'(0). 
\end{align*}
From the definition of $\beta_R$ and Theorem \ref{thm:3.1}, we obtain
\[\beta_R
  \to -\frac{\|\phi_0\|_{L^2}^2}{\int_\R\phi_0\eta_0\,dx}
  =-\frac{\|\phi_0\|_{L^2}^2}{M'(0)}. 
  \]
Combined with \eqref{eq:4.5}, these yield \eqref{eq:4.3}.
\\[3pt]
{\bf Case 2: $p\ge7/3$}. For convenience, we introduce 
\[h_p(R)
  \ce\left\{
  \begin{aligned}
  &(\log R)^{-1} &
  &\text{if }p=\frac{7}{3},
\\&R^{-3+4/(p-1)} &
  &\text{if }p>\frac{7}{3}.
  \end{aligned}\right. 
\]
We note that $h_p(R)\to 0$ as $R\to 0$.
We first prove
\begin{equation}
\label{eq:4.10}
  |\beta_R|=O(h_p(R))\quad\text{as}~R\to\infty.
\end{equation}
Since $(\phi_0\eta_0)(x)\sim -|x|^{-4/(p-1)+2}$ as $|x|\to\infty$, there exist $c_0>0$ and $R_0>0$ such that 
\begin{align*}
\phi_0(x)\eta_0(x)
    \le -c_0|x|^{-4/(p-1)+2} \quad\text{for}~ |x|\ge R_0.
\end{align*}
This yields that
\[  \int_{R_0}^{R}\phi_0(x)\eta_0(x)\,dx
    \lesssim -\int_{R_0}^{R}r^{-4/(p-1)+2}\,dr
    \sim -h_p(R)^{-1}
    \to-\infty
\quad(R\to\infty).
\]
Therefore, we have
\begin{align}
\label{eq:4.11}
    (\phi_0, \chi_R\eta_0)_{L^2}
    \le \int_{0}^{R_0}\phi_0(x)\eta_0(x)\,dx
    +\int_{R_0}^{R}\phi_0(x)\eta_0(x)\,dx
    \sim -h_p(R)^{-1}
\end{align}
for large $R$. Therefore we deduce that
\[|\beta_R|
  \lesssim -\frac{1}{(\phi_0, \chi_R\eta_0)_{L^2}}
  \lesssim h_p(R),
  \]
which yields \eqref{eq:4.10}.

We now prove \eqref{eq:4.4}. It follows from \eqref{eq:4.6}, \eqref{eq:4.8}, and \eqref{eq:4.10} that
\begin{align*}
  |\beta_R\la L_0(\chi_R\eta_0), \phi_0\ra|
  &\lesssim h_p(R)(R^{-4/(p-1)+1}+\|\phi_0\|_{L^2}^2)
\\&\lesssim R^{-2}+h_p(R)\|\phi_0\|_{L^2}^2
  \to 0 \quad(R\to\infty).
\end{align*}
Similarly as in \eqref{eq:4.11}, we note that 
$
|\la\chi_R\phi_0, \chi_R\eta_0\ra|\lesssim h_p(R)^{-1}
$
for large $R$. Therefore it follows from \eqref{eq:4.7}, \eqref{eq:4.9}, and \eqref{eq:4.10} that
\begin{align*}
  |\beta_R^2\la L_0(\chi_R\eta_0), \chi_R\eta_0\ra|
  \lesssim h_p(R)^2(R^{-4/(p-1)+3}+h_p(R)^{-1})
  \lesssim h_p(R)
  \to 0\quad(R\to\infty).
\end{align*}
Combined with \eqref{eq:4.5}, these yield \eqref{eq:4.4}.
\end{proof}
We are now in a position to complete the proof of Theorem \ref{thm:1.3}.
\begin{proof}[Proof of Theorem~\ref{thm:1.3}]
We first note that 
\begin{align}
\label{eq:4.12}
\tbra[L_0\phi_0, \phi_0]<0.
\end{align}
Indeed, by using the equation~\eqref{eq:2.1}, this follows from that
\begin{align*}
  \la L_0\phi_0, \phi_0 \ra
  &=\|\phi_0'\|_{L^2}^2
  +p\|\phi_0\|_{L^{p+1}}^{p+1}
  -q\|\phi_0\|_{L^{q+1}}^{q+1}
\\&=-(p-1)\|\phi_0'\|_{L^2}^2
  -(q-p)\|\phi_0\|_{L^{q+1}}^{q+1}. 
\end{align*}
We set $\psi=\psi_R$ defined by \eqref{eq:4.1}. Then by taking large $R>0$ the result follows from \eqref{eq:4.2}, Theorem \ref{thm:4.1}, and \eqref{eq:4.12}. 
\end{proof}

\section{Proof of instability}
\label{sec:A}

In this section, we prove that the existence of $\psi\in H^1_{\rm rad}(\R^d)$ satisfying \eqref{eq:1.16} implies the instability in the sense of Definition \ref{def:1.1}. We mainly follow the argument of \cite{O14}, which is based on \cite{G91, GSS87, SS85}.
Here we consider the standing wave $e^{i\om t}\phi_\om$ with $\om\ge0$ for \eqref{eq:1.1} with $1<p<q<1+4/d$, which is enough to obtain our results, but it is clear that our arguments in this section work for a more general setting. 

First of all, we reduce the instability problem to the radial setting.
\begin{definition}\label{def:A.1}
We assume \eqref{eq:1.4} if $\omega=0$. The standing wave $e^{i\om t}\phi_\om$ of \eqref{eq:1.1} is \emph{radially stable} if for any $\ve>0$ there exists $\delta>0$ such that if $u_0\in \Hr^1(\R^d)$ satisfies $\|u_0-\phi_\om\|_{H^1}<\delta$, then the $H^1$-solution $u(t)$ of \eqref{eq:1.1} with $u(0)=u_0$ satisfies $u(t)\in V_\ve$ for all $t\in\R$, where
\begin{align}
  \label{eq:A.1}V_\ve
  &\ce\{v\in \Hr^1(\R^d): \inf_{\theta\in\R}\|v-e^{i\theta}\phi_\om\|_{H^1}<\ve\}.
\end{align}
Otherwise, the standing wave is \emph{radially unstable}.
\end{definition}

For completeness, we give a proof of the following result.
\begin{lemma} \label{lem:A.2}
If the standing wave $e^{i\om t}\phi_\om$ is stable in the sense of Definition~\ref{def:1.1}, then it is radially stable in the sense of Definition~\ref{def:A.1}.
\end{lemma}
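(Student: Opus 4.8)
The plan is to argue by contradiction, exploiting two facts: that the flow of \eqref{eq:1.1} preserves radial symmetry, and that for radial profiles the translation parameter appearing in Definition \ref{def:1.1} can be eliminated. Suppose $e^{i\om t}\phi_\om$ is stable in the sense of Definition \ref{def:1.1} but fails to be radially stable in the sense of Definition \ref{def:A.1}. Then there exist $\ve_0>0$, a sequence $u_{0,n}\in\Hr^1(\R^d)$ with $\|u_{0,n}-\phi_\om\|_{H^1}\to0$, and times $t_n$ such that the corresponding solutions satisfy $\inf_{\theta\in\R}\|u_n(t_n)-e^{i\theta}\phi_\om\|_{H^1}\ge\ve_0$. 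On the other hand, applying Definition \ref{def:1.1} with $\ve=1/n$ gives, for $n$ large, $\inf_{(\theta,y)}\|u_n(t_n)-e^{i\theta}\phi_\om(\cdot-y)\|_{H^1}\to0$. Writing $v_n\ce u_n(t_n)$, the goal is to contradict these two facts by showing that the full infimum and the phase-only infimum are comparable for radial $v_n$.

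First I would record that $v_n$ is radial: since \eqref{eq:1.1} is equivariant under the action of $O(d)$ and the $H^1$-solution is unique, radial initial data produce radial solutions for all $t$, using the global existence guaranteed for $1<p<q<1+4/d$. The decisive step is then to remove the translation. Let $(\theta_n,y_n)$ nearly realize the full infimum, so that $\|v_n-e^{i\theta_n}\phi_\om(\cdot-y_n)\|_{H^1}\to0$. Because both $v_n$ and $\phi_\om$ are invariant under $x\mapsto-x$ and the $H^1$-norm is invariant under this reflection, a change of variables gives $\|v_n-e^{i\theta_n}\phi_\om(\cdot-y_n)\|_{H^1}=\|v_n-e^{i\theta_n}\phi_\om(\cdot+y_n)\|_{H^1}$, and the triangle inequality then forces $\|\phi_\om(\cdot-2y_n)-\phi_\om\|_{H^1}\to0$.

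The heart of the matter, and the step I expect to require the most care, is to conclude from this that $y_n\to0$. If $\{y_n\}$ were unbounded, then along a subsequence $\phi_\om(\cdot-2y_n)\rightharpoonup0$ and $\|\phi_\om(\cdot-2y_n)-\phi_\om\|_{H^1}^2\to2\|\phi_\om\|_{H^1}^2>0$, a contradiction; hence $\{y_n\}$ is bounded, and any limit point $y_*$ satisfies $\|\phi_\om(\cdot-2y_*)-\phi_\om\|_{H^1}=0$, forcing $y_*=0$ since $\phi_\om$ is positive and radially decreasing with a unique maximum at the origin. Thus $y_n\to0$. Finally, the triangle inequality $\inf_{\theta}\|v_n-e^{i\theta}\phi_\om\|_{H^1}\le\|v_n-e^{i\theta_n}\phi_\om(\cdot-y_n)\|_{H^1}+\|\phi_\om(\cdot-y_n)-\phi_\om\|_{H^1}$ shows the left-hand side tends to $0$, contradicting $\inf_{\theta}\|v_n-e^{i\theta}\phi_\om\|_{H^1}\ge\ve_0$. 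This closes the argument; the only genuinely delicate points are the reflection identity and the non-escape of $\{y_n\}$, which together are precisely where the radial symmetry of $v_n$ is essential.
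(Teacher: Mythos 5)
Your proof is correct and follows essentially the same route as the paper: the heart of both arguments is the reflection identity for radial functions combined with the triangle inequality to force $\|\phi_\om(\cdot-2y_n)-\phi_\om\|_{H^1}\to0$, and hence smallness of the translation parameter. The only differences are that the paper phrases this as a direct neighborhood inclusion $U_\delta\cap\Hr^1(\R^d)\subset V_\ve$ rather than a contradiction along sequences, and it cites \cite[Lemma 6.1]{L09} for the implication ``$\|\phi_\om(\cdot-2y)-\phi_\om\|_{H^1}$ small $\Rightarrow |y|$ small,'' whereas you prove that step yourself via the weak-convergence/limit-point argument, which is a correct, self-contained substitute for the citation.
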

\begin{proof}
For $\ve>0$ we set
\begin{align*}
U_\ve \ce \{v\in H^1(\R^d): \inf_{(\theta,y)\in\R\times\R^d}\|v-e^{i\theta}\phi_\om(\cdot-y)\|_{H^1}<\ve\}.
\end{align*}
First, we show that 
\begin{align}\label{eq:A.2}
\forall\ve>0~\exists\delta>0\quad\text{s.t.}\quad
  U_\del\cap\Hr^1(\R^d)\subset V_\ve.
\end{align}
For any $\ve>0$, we can take $\del_1, \del_2>0$ such that 
\begin{align}\label{eq:A.3}
  &|y|<\del_1
  \implies \|\phi_\om(\cdot-y)-\phi_\om\|_{H^1}
  <\frac{\ve}{2},
\\\label{eq:A.4}
  &\|\phi_\om(\cdot-2y)-\phi_\om\|_{H^1}
  <\del_2
  \implies |y|<\del_1,
\end{align}
where \eqref{eq:A.4} follows from \cite[Lemma 6.1]{L09}.
 If we put $\del\ce\frac12\min\{\del_2, \ve\}$, then we see that $U_\del\cap\Hr^1(\R^d)\subset V_\ve$. Indeed, if $v\in U_\del\cap\Hr^1(\R^d)$, there exists $(\theta_v, y_v)$ such that
\begin{align}
\label{eq:A.5}
\|v-e^{i\theta_v}\phi_\om(\cdot-y_v)\|_{H^1}<\del.
\end{align}
By the radial symmetry of $v$ and $\phi_\om$, we also have
\[\|v-e^{i\theta_v}\phi_\om(\cdot+y_v)\|_{H^1}<\del. \]
Therefore we obtain
\begin{align*}
  \|\phi_\om-\phi_\om(\cdot-2y_v)\|_{H^1}
  &=\|\phi_\om(\cdot+y_v)-\phi_\om(\cdot-y_v)\|_{H^1}
\\&\le \|e^{i\theta_v}\phi_\om(\cdot+y_v)-v\|_{H^1}
  +\|v-e^{i\theta_v}\phi_\om(\cdot-y_v)\|_{H^1}
\\&<2\del
  \le \del_2.
\end{align*}
We obtain by \eqref{eq:A.4} that $|y_v|<\del_1$, and by \eqref{eq:A.3} that
\begin{align}
\label{eq:A.6}
\|\phi_\om(\cdot-y_v)-\phi_\om\|_{H^1}<\frac{\ve}{2}.
\end{align}
Therefore, we deduce from \eqref{eq:A.5} and \eqref{eq:A.6} that
\begin{align*}
  \|v-e^{i\theta_v}\phi_\om\|_{H^1}
  &\le \|v-e^{i\theta_v}\phi_\om(\cdot-y_v)\|_{H^1}
  +\|e^{i\theta_v}\phi_\om(\cdot-y_v)-e^{i\theta_v}\phi_\om\|_{H^1}
\\&< \del+\|\phi_\om(\cdot-y_v)-\phi_\om\|_{H^1}
  <\frac{\ve}{2}+\frac{\ve}{2}
  =\ve,
\end{align*}
which completes the proof of \eqref{eq:A.2}.

Now we complete the proof. Let $\ve>0$. Then \eqref{eq:A.2} implies that $U_\gamma\cap \Hr^1(\R^d)\subset V_\ve$ for some $\gamma>0$. 
It follows from the assumption of the stability and the local well-posedness in $\Hr^1(\R^d)$ that there exists $\del>0$ such that if $u_0\in \Hr^1(\R^d)$ satisfies $\|u_0-\phi_\om\|_{H^1}<\del$, then the $H^1$-solution $u(t)$ of \eqref{eq:1.1} with $u(0)=u_0$ satisfies 
\begin{align*}
u(t)\in U_\gamma\cap \Hr^1(\R^d)\subset V_\ve
\quad\text{for all}~t\in\R.
\end{align*}
This completes the proof.
\end{proof}

The aim of this section is to show the following.
\begin{proposition}\label{prop:A.3}
Let $d\ge1$, $1<p<q<1+4/d$, and $\om\ge 0$. Assume further that there exists $\psi\in \Hr^1(\R^d)$ such that 
\begin{align*}
  &(\psi, \phi_\om)_{L^2}
  =0,\quad 
  \la S_\om''(\phi_\om)\psi, \psi\ra
  <0.
\end{align*}
Then the standing wave $e^{i\om t}\phi_\om$ of \eqref{eq:1.1} is radially unstable. 
\end{proposition}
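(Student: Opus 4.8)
The plan is to argue by contradiction, following the variational scheme of \cite{SS85, O14} rather than the coercivity-based approach of \cite{W85, W86, GSS87}. Since the conclusion concerns radial instability, we work in $\Hr^1(\R^d)$ and suppose that $e^{i\om t}\phi_\om$ is radially stable, so that a solution starting close to $\phi_\om$ remains in the tube $V_\ve$ of \eqref{eq:A.1} for all time. The first step is to manufacture initial data close to $\phi_\om$ that shares its charge but sits strictly below it in the action. Using $(\psi,\phi_\om)_{L^2}=0$, the normalized curve
\[
  \Psi_z\ce\frac{\norm[\phi_\om]_{L^2}}{\norm[\phi_\om+z\psi]_{L^2}}\,(\phi_\om+z\psi)
\]
satisfies $\norm[\Psi_z]_{L^2}=\norm[\phi_\om]_{L^2}$, $\Psi_0=\phi_\om$, and $\pt_z\Psi_z|_{z=0}=\psi$, while a Taylor expansion together with $S_\om'(\phi_\om)=0$ gives $S_\om(\Psi_z)=S_\om(\phi_\om)+\tfrac{z^2}{2}\la L_\om\psi,\psi\ra+o(z^2)$. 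Hence for a fixed small $z_0\neq0$ the datum $u_0\ce\Psi_{z_0}$ obeys $\tfrac12\norm[u_0]_{L^2}^2=\tfrac12\norm[\phi_\om]_{L^2}^2$ and $S_\om(u_0)<S_\om(\phi_\om)$, and by conservation of the charge and of $E$ the corresponding solution keeps both relations: $\tfrac12\norm[u(t)]_{L^2}^2=\tfrac12\norm[\phi_\om]_{L^2}^2$ and $S_\om(u(t))=S_\om(u_0)<S_\om(\phi_\om)$ for all $t$.

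Second, I would introduce a bounded auxiliary functional adapted to the unstable direction $\psi$, of the form $\mathcal A(u)\ce(\mathrm{Im}\,u,\psi)_{L^2}$ (after modulating out the phase symmetry near the orbit). On $V_\ve$ one has $|\mathcal A(u)|\le\norm[\psi]_{L^2}\norm[u]_{L^2}$, and a direct computation using \eqref{eq:1.1} shows that its time derivative is governed, to leading order, by $-\la L_\om\,\mathrm{Re}(e^{-i\theta}u-\phi_\om),\psi\ra$; in particular $\tfrac{d}{dt}\mathcal A(\phi_\om)=\om(\phi_\om,\psi)_{L^2}=0$, while the derivative of this expression in the direction $\psi$ produces the term $-\la L_\om\psi,\psi\ra>0$. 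The heart of the argument is to upgrade this infinitesimal statement to a uniform lower bound $\tfrac{d}{dt}\mathcal A(u(t))\ge\del_0>0$ valid as long as $u(t)\in V_\ve$. This is exactly the point where coercivity of $L_\om$ is avoided: instead of spectral estimates I would combine the modulation decomposition with the two conserved constraints $\tfrac12\norm[u(t)]_{L^2}^2=\tfrac12\norm[\phi_\om]_{L^2}^2$ and $S_\om(u(t))<S_\om(\phi_\om)$ to trap the solution on the ``$\psi$-side'' of the orbit and keep the leading term of $\tfrac{d}{dt}\mathcal A$ bounded away from zero, in the spirit of \cite{SS85, O14}.

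Granting the differential inequality, the contradiction is immediate: integrating $\tfrac{d}{dt}\mathcal A(u(t))\ge\del_0$ forces $\mathcal A(u(t))\to+\infty$, which is incompatible with $|\mathcal A(u(t))|\lesssim\norm[\psi]_{L^2}(\norm[\phi_\om]_{L^2}+\ve)$ coming from $u(t)\in V_\ve$. Hence the solution must leave $V_\ve$, contradicting radial stability, and Proposition~\ref{prop:A.3} follows; Corollary~\ref{cor:1.4} is then obtained by feeding the function $\psi$ produced in Theorem~\ref{thm:1.3} into this scheme. I expect the main obstacle to be precisely the coercivity-free monotonicity estimate of the second step, namely controlling the modulation parameters and the nonlinear remainder uniformly in time; in the stationary case $\om=0$ this is further complicated by the fact that $L_0$ carries a potential of critical decay and $\phi_0$ decays only algebraically, so that the tail contributions to the error terms must be estimated by hand rather than absorbed by coercivity.
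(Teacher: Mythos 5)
Your overall scheme coincides with the paper's (phase modulation on the tube $V_{\ve_0}$, the bounded auxiliary functional $A(v)=(e^{i\theta(v)}v,i\psi)_{L^2}$, and a drift-versus-boundedness contradiction, in the spirit of \cite{SS85,O14}), and your construction of initial data with $S_\om(u_0)<S_\om(\phi_\om)$ is correct (the $L^2$ renormalization is harmless but unnecessary: the argument only uses conservation of $S_\om=E+\om M$, not the charge constraint separately). The genuine gap is in your second step, which you yourself flag as the ``main obstacle'': you assert a uniform bound $\frac{d}{dt}\mathcal{A}(u(t))\ge\del_0>0$ and propose to obtain it by a modulation decomposition that traps $u(t)$ on the ``$\psi$-side'' of the orbit and controls the nonlinear remainder. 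As written this is not an argument, and the route you sketch, namely linearizing $\frac{d}{dt}\mathcal{A}$ around $\phi_\om$ and keeping the leading term $-\la L_\om\,\mathrm{Re}(e^{-i\theta}u-\phi_\om),\psi\ra$ away from zero, is exactly the kind of step that normally requires spectral/coercivity information on $L_\om$ to absorb the remainder; that information is unavailable here, which is the whole reason the paper avoids the framework of \cite{W85,W86,GSS87}.

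The paper closes this gap by a purely variational mechanism that is absent from your proposal. First, Lemma~\ref{lem:A.5} (a consequence of the Nehari characterization \eqref{eq:A.7}) together with $\la S_\om''(\phi_\om)\psi,\psi\ra<0$ gives $\la K_\om'(\phi_\om),\psi\ra\ne0$; hence, by the intermediate value theorem, every $v\in V_{\ve_0}$ can be shifted along $a(v)$ onto the Nehari manifold: $K_\om(v+\Lam(v)a(v))=0$ with $|\Lam(v)|<\lam_0$ (Lemma~\ref{lem:A.9}). Second, since the strict inequality $\la S_\om''\psi,\psi\ra<0$ persists for arguments near the orbit, a Taylor expansion yields the one-sided estimate $S_\om(v+\lam a(v))\le S_\om(v)+\lam P(v)$ for $|\lam|<\lam_0$, where $P(v)=\la S_\om'(v),a(v)\ra$ satisfies $\frac{d}{dt}A(u(t))=-P(u(t))$ along the flow (Lemmas~\ref{lem:A.8} and \ref{lem:A.7}). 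Combining these two facts with the minimality of $S_\om(\phi_\om)$ on the Nehari manifold gives the functional inequality $S_\om(\phi_\om)\le S_\om(v)+\lam_0|P(v)|$ valid on all of $V_{\ve_0}$ (Lemma~\ref{lem:A.10}). Conservation of $S_\om$ then converts the fixed action deficit $S_\om(\phi_\om)-S_\om(u_0)>0$ into the uniform bound $|P(u(t))|\ge\del>0$ for as long as $u(t)\in V_{\ve_0}$, and continuity of $t\mapsto P(u(t))$ fixes its sign; no trapping, no control of modulation parameters, and no remainder estimates are needed. Without this mechanism (or an equivalent substitute), your differential inequality remains unproven and the proof does not close.
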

In what follows in this section, we impose the assumption of Proposition~\ref{prop:A.3}.

\subsection{Variational lemmas}

We organize the variational properties of $\phi_\om$. 
We define the Nehari functional by
\[K_\om(v)
  = \pt_\lam S_\om(\lam v)|_{\lam=1}
  =\la S_\om'(v), v\ra .
  \]

\begin{lemma}
$K_\om(\phi_\om)=0$ and 
\begin{align} \label{eq:A.7}
  S_\om(\phi_\om)
  =\inf\{S_\om(v): v\in \Hr^1(\R^d),\ v\ne0,\ K_\om(v)=0\}.
\end{align} 
\end{lemma}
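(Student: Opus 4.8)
The plan is to reduce the lemma to a standard Nehari minimization and then invoke the known uniqueness of the ground state. The identity $K_\om(\phi_\om)=0$ is immediate: since $\phi_\om$ solves \eqref{eq:1.2} we have $S_\om'(\phi_\om)=0$, hence $K_\om(\phi_\om)=\la S_\om'(\phi_\om),\phi_\om\ra=0$. In particular $\phi_\om$ is admissible in \eqref{eq:A.7}, so, writing $\mathcal N\ce\{v\in\Hr^1(\R^d): v\ne0,\ K_\om(v)=0\}$ and $d_\om\ce\inf_{v\in\mathcal N}S_\om(v)$, we already have $d_\om\le S_\om(\phi_\om)$. The whole content of the lemma is therefore the reverse inequality $S_\om(\phi_\om)\le d_\om$.

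First I would run the fibering analysis of the map $\lambda\mapsto S_\om(\lambda v)$ for fixed $v\ne0$. Since
\[
  S_\om(\lambda v)=\frac{\lambda^2}{2}\bigl(\|\nabla v\|_{L^2}^2+\om\|v\|_{L^2}^2\bigr)+\frac{\lambda^{p+1}}{p+1}\|v\|_{L^{p+1}}^{p+1}-\frac{\lambda^{q+1}}{q+1}\|v\|_{L^{q+1}}^{q+1}
\]
and $\pt_\lambda S_\om(\lambda v)=\lambda^{-1}K_\om(\lambda v)$, the strict ordering $2<p+1<q+1$ forces this map to increase from $0$, attain its maximum at a unique $\lambda_*(v)>0$, and then decrease to $-\infty$; moreover $\lambda_*(v)$ is the unique positive scaling with $\lambda_*(v)v\in\mathcal N$. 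This gives the min--max reformulation $d_\om=\inf_{v\ne0}\max_{\lambda>0}S_\om(\lambda v)$. On $\mathcal N$ the combination $S_\om=S_\om-\frac{1}{q+1}K_\om$ has only nonnegative terms, namely
\[
  S_\om(v)=\Bigl(\tfrac12-\tfrac{1}{q+1}\Bigr)\bigl(\|\nabla v\|_{L^2}^2+\om\|v\|_{L^2}^2\bigr)+\Bigl(\tfrac{1}{p+1}-\tfrac{1}{q+1}\Bigr)\|v\|_{L^{p+1}}^{p+1},
\]
which yields coercivity ($H^1$-control when $\om>0$, control of $\|\nabla v\|_{L^2}^2+\|v\|_{L^{p+1}}^{p+1}$ when $\om=0$) and, together with the Gagliardo--Nirenberg inequality, the strict positivity $d_\om>0$ and a lower bound for $\|v\|_{H^1}$ on $\mathcal N$.

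Next I would produce a minimizer. Taking a minimizing sequence in $\mathcal N\cap\Hr^1(\R^d)$, the coercivity above bounds it in $H^1$ (resp. $\dot H^1\cap L^{p+1}$ when $\om=0$), and since we work in the radial class with the subcritical exponents $1<p<q<1+4/d$, I would extract a nontrivial limit $\psi$ via compactness of the radial embedding. Replacing $v$ by $|v|$ and rescaling back onto $\mathcal N$ lets me take $\psi\ge0$, and a Lagrange-multiplier computation shows that the multiplier vanishes: testing $S_\om'(\psi)=\mu K_\om'(\psi)$ against $\psi$ gives $0=K_\om(\psi)=\mu\la K_\om'(\psi),\psi\ra$, while on $\mathcal N$ one checks $\la K_\om'(\psi),\psi\ra=(1-q)\bigl(\|\nabla\psi\|_{L^2}^2+\om\|\psi\|_{L^2}^2\bigr)+(p-q)\|\psi\|_{L^{p+1}}^{p+1}<0$, so $\mu=0$ and $\psi$ solves \eqref{eq:1.2}. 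By the uniqueness of the positive radial ground state \cite{PS98, ST00}, $\psi$ equals $\phi_\om$, whence $d_\om=S_\om(\psi)=S_\om(\phi_\om)$, which is the missing inequality.

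The step I expect to be the main obstacle is the compactness of the minimizing sequence, and specifically the massless case $\om=0$: there the absence of the $\om\|v\|_{L^2}^2$ term, the working space $(\dot H^1\cap L^{p+1})(\R^d)$, and the algebraic decay of $\phi_0$ make the direct radial compactness argument borderline, and I would lean on the variational construction of $\phi_0$ in $(\dot H^1\cap L^{p+1})(\R^d)$ from \cite{FH21} to close it.
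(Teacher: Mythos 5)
Your proposal takes a genuinely different route from the paper, and most of its individual steps are sound ($K_\om(\phi_\om)=0$, the fibering analysis, the coercive decomposition $S_\om=S_\om-\tfrac{1}{q+1}K_\om$ on the constraint, the vanishing of the Lagrange multiplier). The paper itself does none of this work: its proof is a two-line reduction that quotes the full-space variational characterization of \cite[Theorem 2.1]{FH21} — $S_\om(\phi_\om)$ is the infimum of $S_\om$ over nonzero $v$ with $K_\om(v)=0$ in $X_\om$, where $X_\om=H^1(\R^d)$ for $\om>0$ and $X_0=(\dot H^1\cap L^{p+1})(\R^d)$ — and then observes that restricting the admissible set to $\Hr^1(\R^d)\subset X_\om$ can only increase the infimum, while $\phi_\om$ is itself radial and admissible, so equality holds; for $\om=0$ this uses $\phi_0\in H^1(\R^d)$ (valid under \eqref{eq:1.4}) together with $H^1(\R^d)\subset(\dot H^1\cap L^{p+1})(\R^d)$.

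The genuine gap in your argument is the compactness step, and it fails exactly where the paper needs the lemma. This lemma feeds into Proposition~\ref{prop:A.3}, which is stated for $d\ge1$, and the paper's main results (Theorem~\ref{thm:1.3}, Corollary~\ref{cor:1.4}) live in $d=1$. In one dimension ``radial'' means even, and there is no compact embedding of even $H^1$ functions into any $L^r(\R)$: the symmetrized translates $v_n=\varphi(\cdot-n)+\varphi(\cdot+n)$ of a fixed even bump are bounded in $\Hr^1(\R)$, converge weakly to $0$, and do not converge in $L^{q+1}$. So ``extract a nontrivial limit via compactness of the radial embedding'' is unavailable, and your minimizing sequence may split into escaping bumps. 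The standard repair is an extra idea your write-up does not contain: pass to symmetric decreasing rearrangements (the P\'olya--Szeg\H{o} inequality decreases $\|\nabla\cdot\|_{L^2}$ and preserves all $L^r$ norms, so after reprojecting onto the Nehari constraint the sequence is still minimizing), and use that the cone of symmetric decreasing functions embeds compactly in every dimension, including $d=1$, thanks to the pointwise bound $|v(x)|\lesssim\|v\|_{H^1}|x|^{-d/2}$. Separately, for $\om=0$ your minimizing sequence carries no $L^2$ bound, so its weak limit need not lie in $\Hr^1(\R^d)$; you resolve this only by ``leaning on'' \cite{FH21} — but once you are willing to quote \cite{FH21}, the paper's restriction argument already yields the whole lemma, for both $\om>0$ and $\om=0$, with no compactness analysis at all.
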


\begin{proof}
From \cite[Theorem 2.1]{FH21} we have $K_\om(\phi_\om)=0$ and 
\begin{align} \label{eq:A.8}
  S_\om(\phi_\om)
  =\inf\{S_\om(v): v\in X_\om,\ v\ne0,\ K_\om(v)=0\},
\end{align} 
where 
\[X_\om
  \ce \begin{cases}
  \hfil H^1(\R^d) &
\text{if}~\om>0,
\\(\dot{H}^1\cap L^{p+1})(\R^d)
  &\text{if}~\om=0.
  \end{cases}
  \]
Since $\phi_\om$ is radial, \eqref{eq:A.7} with $\om>0$ immediately follows from \eqref{eq:A.8}. We note that $\phi_0$ belongs to $H^1(\R^d)$ when $p<1+4/d$ (see \eqref{eq:1.4}). Therefore, we deduce from the fact $H^1(\R^d)\subset(\dot{H}^1\cap L^{p+1})(\R^d)$ that \eqref{eq:A.7} with $\om=0$ holds. 
\end{proof}

\begin{lemma}\label{lem:A.5}
If $v\in \Hr^1(\R^d)$ satisfies $\la K_\om'(\phi_\om), v\ra=0$, then $\la S_\om''(\phi_\om)v, v\ra\ge0$.
\end{lemma}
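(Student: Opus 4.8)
The plan is to read off the conclusion from the variational characterization \eqref{eq:A.7}: $\phi_\om$ minimizes $S_\om$ over the radial Nehari manifold $\mathcal{N}\ce\{v\in\Hr^1(\R^d): v\neq0,\ K_\om(v)=0\}$, and then to convert this \emph{constrained} minimality into a second-order condition for $L_\om=S_\om''(\phi_\om)$ along directions tangent to $\mathcal{N}$ at $\phi_\om$. The first step is to identify the constraint. Since $K_\om(v)=\la S_\om'(v), v\ra$, one differentiates to get $\la K_\om'(v), h\ra=\la S_\om''(v)h, v\ra+\la S_\om'(v), h\ra$, and evaluating at $v=\phi_\om$ (where $S_\om'(\phi_\om)=0$) together with self-adjointness of $L_\om$ gives the clean identity $K_\om'(\phi_\om)=L_\om\phi_\om$. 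Hence the hypothesis $\la K_\om'(\phi_\om), v\ra=0$ reads exactly $\la L_\om\phi_\om, v\ra=0$, i.e. $v$ is tangent to $\mathcal{N}$ at $\phi_\om$. We may assume $v\neq0$, the case $v=0$ being trivial.

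Next I would record the transversality of the scaling ray to $\mathcal{N}$. Using $K_\om(\phi_\om)=0$, a short computation shows $\la L_\om\phi_\om, \phi_\om\ra=\pt_\lam^2 S_\om(\lam\phi_\om)|_{\lam=1}=(p-1)\|\phi_\om\|_{L^{p+1}}^{p+1}-(q-1)\|\phi_\om\|_{L^{q+1}}^{q+1}$; writing $A=\|\nabla\phi_\om\|_{L^2}^2+\om\|\phi_\om\|_{L^2}^2>0$, $B=\|\phi_\om\|_{L^{p+1}}^{p+1}$, $C=\|\phi_\om\|_{L^{q+1}}^{q+1}$, the relation $A+B-C=0$ forces $C>B$, whence $(q-1)C>(p-1)B$ and $\la L_\om\phi_\om, \phi_\om\ra<0$ (this extends \eqref{eq:4.12} to all $\om\ge0$). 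In particular $K_\om'(\phi_\om)\neq0$. Applying the implicit function theorem to the real function $(s,\lam)\mapsto K_\om(\lam(\phi_\om+sv))$ near $(0,1)$, where the $\lam$-derivative at $(0,1)$ equals $\la L_\om\phi_\om, \phi_\om\ra\neq0$, produces for each small $s$ a unique $\lam(s)$ near $1$ with $\gamma(s)\ce\lam(s)(\phi_\om+sv)\in\mathcal{N}$ and $\gamma(0)=\phi_\om$.

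The core of the argument is then the second-order test. Since $\gamma(s)\in\mathcal{N}$ and $\phi_\om$ minimizes $S_\om$ on $\mathcal{N}$ by \eqref{eq:A.7}, the scalar function $h(s)\ce S_\om(\gamma(s))$ has a local minimum at $s=0$, so $h''(0)\ge0$. Differentiating twice and using $S_\om'(\phi_\om)=0$ kills the term $\la S_\om'(\phi_\om), \gamma''(0)\ra$, leaving $h''(0)=\la L_\om\gamma'(0), \gamma'(0)\ra$ with $\gamma'(0)=\lam'(0)\phi_\om+v$. Differentiating the identity $K_\om(\gamma(s))\equiv0$ at $s=0$ gives $\lam'(0)\la L_\om\phi_\om, \phi_\om\ra+\la L_\om\phi_\om, v\ra=0$, and since $\la L_\om\phi_\om, v\ra=0$ and $\la L_\om\phi_\om, \phi_\om\ra\neq0$ we obtain $\lam'(0)=0$, hence $\gamma'(0)=v$. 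Therefore $0\le h''(0)=\la L_\om v, v\ra$, which is the claim. (Even without computing $\lam'(0)$, the tangency $\la L_\om\phi_\om, v\ra=0$ removes the cross term in $\la L_\om\gamma'(0), \gamma'(0)\ra$, and the negativity $\la L_\om\phi_\om, \phi_\om\ra<0$ makes the residual $\lam'(0)^2$-term favorable.)

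I expect the main obstacle to be the construction and regularity of the scaling projection $\lam(\cdot)$ onto $\mathcal{N}$ rather than the final algebra: one must guarantee that the implicit-function curve stays in $\Hr^1(\R^d)$ and that $s\mapsto S_\om(\gamma(s))$ is genuinely $C^2$, which is slightly delicate when $\om=0$ because the natural variational space there is $(\dot H^1\cap L^{p+1})(\R^d)$. Restricting every computation to the two-parameter family $\lam(\phi_\om+sv)$, on which the relevant integrals depend smoothly on $(s,\lam)$ by dominated convergence, sidesteps any worry about full Fr\'echet $C^2$-regularity of $S_\om$ on $H^1$ and reduces this step to a finite-dimensional application of the implicit function theorem. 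The favorable sign $\la L_\om\phi_\om, \phi_\om\ra<0$ is precisely what lets the $\lam'(0)$-contribution be absorbed, so no spectral input beyond the constrained minimality \eqref{eq:A.7} is required.
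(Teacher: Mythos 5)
Your proof is correct and takes essentially the same route as the paper: an implicit-function-theorem construction of a curve through $\phi_\om$ inside the Nehari manifold that is tangent to $v$ (your multiplicative correction $\lam(s)(\phi_\om+sv)$ versus the paper's additive one $\phi_\om+sv+\gamma(s)\phi_\om$ is immaterial, since both yield a tangent vector equal to $v$ once the correction's derivative vanishes), followed by the second-order minimality condition coming from \eqref{eq:A.7} and $S_\om'(\phi_\om)=0$. The only difference is that you explicitly verify the nondegeneracy $\la K_\om'(\phi_\om),\phi_\om\ra=\la L_\om\phi_\om,\phi_\om\ra<0$, which the paper merely asserts (and records separately as \eqref{eq:4.12} for $\om=0$).
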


\begin{proof}
We define the function with two variables $F(s, \gamma)$ by
\[F(s, \gamma)
  \ce K_\om(\phi_\om+sv+\gamma\phi_\om),\quad 
  s, \gamma\in\R.
  \]
We note that $F(0,0)=K_\om(\phi_\om)=0$ and $\pt_\gamma F(0, 0)=\la K_\om'(\phi_\om), \phi_\om\ra\ne0$. The implicit function theorem implies that there exists a function $s\mapsto \gamma(s)$ defined around $s=0$ such that $\gamma(0)=0$ and $F(s, \gamma(s))=0$ for small $|s|$. By differentiating this relation at $s=0$, we obtain 
\begin{align*}
\gamma'(0)=-\frac{F_s(0,0)}{F_\gamma(0,0)}=-\frac{\la K_\om'(\phi_\om), v\ra}{\la K_\om'(\phi_\om), \phi_\om\ra}=0.
\end{align*}
Moreover, by the variational characterization \eqref{eq:A.7}, we see that $s\mapsto S_\om(\phi_\om+sv+\gamma(s)\phi_\om)$ attains the local minimum at $s=0$. Therefore, using $S_\om'(\phi_\om)=0$ and $\gamma'(0)=0$, we obtain
\begin{align*}
  0&\le \left.\frac{d^2}{ds^2}\right|_{s=0}S_\om(\phi_\om+sv+\gamma(s)\phi_\om)
  =\la S_\om''(\phi_\om)v, v\ra.
\end{align*}
This completes the proof.
\end{proof}

\subsection{Modulational lemmas}

\begin{lemma}\label{lem:A.6}
There exists a $C^1$-mapping $\theta\colon V_{\ve_0}\to \R/2\pi\Z$ for some $\ve_0>0$ which satisfies $\theta(\phi_\om)=0$ and the following properties.
\begin{enumerate}
\setlength{\itemsep}{2pt}
\item $(e^{i\theta(v)}v, i\phi_\om )_{L^2}=0$ for all $v\in V_{\ve_0}$.

\item $\theta(e^{is}v)=\theta(v)-s$ for all $v\in V_{\ve_0}$ and $s\in\R/2\pi\Z$.
\item $\theta'(v)\in \Hr^1(\R^d)$ for all $v\in V_{\ve_0}$. 
\end{enumerate}
\end{lemma}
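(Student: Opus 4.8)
The plan is to build $\theta$ with the implicit function theorem, exploiting the $U(1)$-equivariance of the whole construction. First I would introduce the smooth (in particular $C^1$) real-valued functional
\[
  G(v,\theta)\ce (e^{i\theta}v,\, i\phi_\om)_{L^2},\qquad (v,\theta)\in \Hr^1(\R^d)\times\R,
\]
and note that, because $\phi_\om$ is real-valued, $G(\phi_\om,0)=(\phi_\om,i\phi_\om)_{L^2}=0$, while
\[
  \pt_\theta G(\phi_\om,0)=(i\phi_\om,i\phi_\om)_{L^2}=\|\phi_\om\|_{L^2}^2>0 .
\]
The implicit function theorem then yields, on some $H^1$-ball $B$ around $\phi_\om$, a unique $C^1$ map $v\mapsto\theta(v)$ with $\theta(\phi_\om)=0$ and $G(v,\theta(v))=0$, which is property (1) in a neighborhood of $\phi_\om$.

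To upgrade this local statement to the full tube $V_{\ve_0}$, I would use the elementary identity $G(e^{is}v,\theta)=G(v,\theta+s)$. For $\ve_0$ small, every $v\in V_{\ve_0}$ admits a phase $\theta_1$ with $e^{-i\theta_1}v\in B$, and any two admissible phases $\theta_1,\theta_1'$ satisfy $\|e^{i\theta_1}\phi_\om-e^{i\theta_1'}\phi_\om\|_{H^1}<2\ve_0$, which forces $\theta_1\equiv\theta_1'\pmod{2\pi}$ up to a small discrepancy; together with the equivariance identity this makes $\theta(v)\ce\theta(e^{-i\theta_1}v)-\theta_1$ a well-defined $C^1$ map into $\R/2\pi\Z$. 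The same identity, via uniqueness in the implicit function theorem, gives $G(e^{is}v,\theta(v)-s)=G(v,\theta(v))=0$, hence property (2).

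For property (3) I would differentiate the defining relation and use the implicit function theorem formula
\[
  \theta'(v)=-\frac{\pt_v G(v,\theta(v))}{\pt_\theta G(v,\theta(v))},\qquad
  \pt_v G(v,\theta)[h]=(h,\,ie^{-i\theta}\phi_\om)_{L^2}.
\]
Since $\pt_\theta G(v,\theta(v))$ is a nonzero scalar and $ie^{-i\theta(v)}\phi_\om$ is radial (as $\phi_\om$ is radial), the functional $\theta'(v)$ on $\Hr^1(\R^d)$ is represented, through the Riesz isomorphism of $\Hr^1(\R^d)$, by a radial element; this gives $\theta'(v)\in\Hr^1(\R^d)$.

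The main obstacle I anticipate is not a single estimate but the bookkeeping needed to pass from the purely local conclusion of the implicit function theorem to a globally defined, single-valued, $C^1$ map on the whole tubular neighborhood $V_{\ve_0}$: one must fix $\ve_0$ small enough that the approximating phase is determined modulo $2\pi$, and then verify that the local solution glues consistently with the group action so that properties (1) and (2) hold on all of $V_{\ve_0}$. Everything else—the nondegeneracy of $\pt_\theta G$ and the radial symmetry of the Riesz representative—follows at once from $\phi_\om$ being a real, radial, nonzero profile.
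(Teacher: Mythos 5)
Your construction follows the paper's proof almost verbatim: the same functional $F(v,\theta)=(e^{i\theta}v,i\phi_\om)_{L^2}$, the same nondegeneracy computation $\pt_\theta F(\phi_\om,0)=\|\phi_\om\|_{L^2}^2>0$, the implicit function theorem on a ball around $\phi_\om$, and the extension to the tube $V_{\ve_0}$ via the equivariance $F(e^{is}v,\theta)=F(v,\theta+s)$ (the paper itself defers the gluing details to the reference \cite{L09}, so your sketch of that step is at the same level of detail).

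The one genuine flaw is your justification of property (3). You argue that $\theta'(v)$, being a bounded linear functional on $\Hr^1(\R^d)$, is represented ``through the Riesz isomorphism of $\Hr^1(\R^d)$'' by a radial element. Under that reading, (3) is vacuous: every bounded functional on the Hilbert space $\Hr^1(\R^d)$ has a Riesz representative, so the statement would carry no information. The actual content of (3) --- and what is used downstream, both in defining $a(v)=-iA'(v)$ as a genuine function and in Lemma \ref{lem:A.7}, where $A'(u(t))$ must be paired against $\pt_t u(t)\in H^{-1}$ --- is that $\theta'(v)$ is represented through the $L^2$ duality pairing by an element of $\Hr^1(\R^d)$. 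This is exactly what your own computation delivers: since $\pt_v F(v,\theta)[h]=(h,\,ie^{-i\theta}\phi_\om)_{L^2}$ and $\pt_\theta F(v,\theta(v))=(e^{i\theta(v)}v,\phi_\om)_{L^2}$ is a nonzero scalar, the implicit function theorem formula gives
\[
  \theta'(v)=-\frac{ie^{-i\theta(v)}\phi_\om}{(e^{i\theta(v)}v,\phi_\om)_{L^2}},
\]
which lies in $\Hr^1(\R^d)$ because $\phi_\om$ does. So the repair is one line --- identify the representative via the $L^2$ pairing, not via the $H^1$ inner product --- but as written your argument for (3) proves a trivial statement rather than the one the lemma needs.
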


\begin{proof}
Let $F(v, \theta)\ce (e^{i\theta}v, i\phi_\om)_{L^2}$ for $v\in \Hr^1(\R^d)$ and $\theta\in\R$. Then $F(\phi_\om, 0)=(\phi_\om, i\phi_\om)_{L^2}=0$ and $\pt_\theta F(\phi_\om, 0)=\|\phi_\om\|_{L^2}^2>0$. Thus, the implicit function theorem implies that  there exist $\ve_0>0$ and the $C^1$-mapping $\hat{\theta}\colon B(\phi_\om, \ve_0)\to\R$ such that $F(v, \hat{\theta}(v))=0$ for all $v\in B(\phi_\om, \ve_0)$, where $B(\phi_\om, \ve_0)$ is the open ball in $\Hr^1(\R^d)$ with the center $\phi_\om$ and the radius $\ve_0$. After that, taking $\ve_0$ small enough if necessary, one can extend $\hat{\theta}$ to the mapping $\theta$ defined on $V_{\ve_0}$, and see that $\hat{\theta}$ satisfies (1) and (2) (see \cite{L09} for more details). 

Finally we prove (3). We note that $\pt_\theta F(v, \theta)=(e^{i\theta}v, \phi_\om)_{L^2}$ and $\pt_vF(v, \theta)=ie^{-i\theta}\phi_\om$. By differentiating $F(v, \theta(v))=0$ with respect to $v$, we obtain
\[\theta'(v) 
  =-\frac{\pt_v F(v, \theta(v))}{\pt_\theta F(v, \theta(v))}
  =-\frac{ie^{-i\theta(v)}\phi_\om}{(e^{i\theta(v)}v, \phi_\om)_{L^2}}
  \in \Hr^1(\R^d), 
  \]
which gives the result.
\end{proof}

We define the function $A\colon V_{\ve_0}\to\R$ by
\[A(v)
  = (e^{i\theta(v)}v, i\psi)_{L^2}. \]
We differentiate $v\mapsto A(v)$ and obtain
\begin{align*}
  A'(v)
  &=(e^{i\theta(v)}v, \psi)_{L^2}\theta'(v)
  +ie^{-i\theta(v)}\psi.
\end{align*}
Thus, Lemma~\ref{lem:A.6} (3) implies $A'(v)\in \Hr^1(\R^d)$. Moreover, it follows from the definition of $A$ that
\begin{equation}\label{eq:A.9}
  \sup_{v\in  V_{\ve_0}}\lvert A(v)\rvert<\infty.
\end{equation}
We now define $a\colon V_{\ve_0}\to\Hr^1(\R^d)$ by 
\begin{equation}\label{eq:A.10}
  a(v)= -iA'(v)
  =-(e^{i\theta(v)}v, \psi)_{L^2}i\theta'(v)
  +e^{-i\theta(v)}\psi 
\end{equation} 
and $P\colon V_{\ve_0}\to\R$ by 
\[P(v) 
  = \langle S_\om'(v), a(v)\rangle.
  \]

\begin{lemma} \label{lem:A.7}
Let $u$ be a $H^1$-solution of \eqref{eq:1.1}. Then for $t$ satisfying $u(t)\in V_{\ve_0}$, we have
\begin{equation*}
  \frac{d}{dt}A(u(t))
  =-P(u(t)).
\end{equation*}
\end{lemma}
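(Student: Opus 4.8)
The plan is to compute $\frac{d}{dt}A(u(t))$ directly by differentiating $A(u(t))=(e^{i\theta(u(t))}u(t),i\psi)_{L^2}$ along the flow, and then identify the result with $-P(u(t))=-\langle S_\om'(u(t)),a(u(t))\rangle$. First I would use the chain rule on $A(v)=(e^{i\theta(v)}v,i\psi)_{L^2}$, writing $\frac{d}{dt}A(u(t))=\langle A'(u(t)),\dot u(t)\rangle$, where $\dot u=\pt_t u$. Since $u$ solves \eqref{eq:1.1}, we have $i\pt_t u=-\Del u+|u|^{p-1}u-|u|^{q-1}u=S_\om'(u)-\om u$, so that $\pt_t u=-i(S_\om'(u)-\om u)$. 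Substituting gives $\frac{d}{dt}A(u(t))=\langle A'(u(t)),-i(S_\om'(u)-\om u)\rangle$.

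Next I would invoke the definition $a(v)=-iA'(v)$ from \eqref{eq:A.10}, so that $\langle A'(u),-iS_\om'(u)\rangle=\langle -iA'(u),S_\om'(u)\rangle=\langle a(u),S_\om'(u)\rangle=P(u)$, up to checking that the real-inner-product pairing and the factor of $i$ are handled consistently (the real structure of the $L^2$ product means $\langle A'(u),-iw\rangle=\langle -iA'(u),w\rangle$ for the appropriate convention, since multiplication by $i$ is skew-adjoint for the real inner product). Thus the $S_\om'(u)$ contribution produces exactly $P(u)$. The sign works out to give $-P(u(t))$ provided I track the factor of $-i$ correctly against the convention $P(v)=\langle S_\om'(v),a(v)\rangle$; I expect the single minus sign in the statement to emerge from this bookkeeping.

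The remaining task is to show that the $\om u$ term contributes nothing, i.e. $\langle A'(u),-i(-\om u)\rangle=\om\langle A'(u),iu\rangle=0$. Here I would use the phase-equivariance of $A$. By Lemma~\ref{lem:A.6}(2) we have $\theta(e^{is}v)=\theta(v)-s$, hence $A(e^{is}v)=(e^{i\theta(e^{is}v)}e^{is}v,i\psi)_{L^2}=(e^{i\theta(v)}v,i\psi)_{L^2}=A(v)$, so $A$ is invariant under the phase action $s\mapsto e^{is}u$. Differentiating this invariance at $s=0$ gives $\langle A'(u),iu\rangle=0$, which kills the $\om$-term exactly. This is the cleanest way to dispose of the term coming from the $\om u$ in $S_\om'(u)-\om u$.

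The main obstacle will be the careful treatment of the real inner product and the multiplication-by-$i$ conventions: one must be scrupulous about whether $(\cdot,\cdot)_{L^2}$ denotes the real or complex pairing, and about the adjoint relation $\langle A'(v),iw\rangle=-\langle iA'(v),w\rangle$ that underlies both the extraction of $a(v)=-iA'(v)$ and the vanishing of the $\om$-term. Everything else is routine differentiation along the flow. I would also note that the computation is legitimate only for $t$ with $u(t)\in V_{\ve_0}$, where $\theta$ and hence $A$ are $C^1$, which is exactly the hypothesis of the lemma; the smoothness of $t\mapsto A(u(t))$ then follows from $A\in C^1$ together with the $H^1$-continuity of the flow.
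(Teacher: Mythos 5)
Your route is the paper's own: differentiate $A$ along the flow via the chain rule, insert the equation in Hamiltonian form, identify the $S_\om'$-contribution through $a(v)=-iA'(v)$, and dispose of the $\om u$ term by differentiating the phase invariance $A(e^{is}v)=A(v)$ at $s=0$ (the paper packages this last point as $\langle v,a(v)\rangle=0$ and works with $E'(u)$ instead of $S_\om'(u)-\om u$, which is the identical computation). So structurally nothing is missing.

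The one genuine defect is the sign of your central identity, and in a lemma whose entire content is a sign this cannot be deferred to ``bookkeeping''. Skew-adjointness of multiplication by $i$ for the real $L^2$ pairing gives
\begin{equation*}
\langle f,-iw\rangle=\langle if,w\rangle=-\langle -if,w\rangle,
\end{equation*}
whereas you wrote $\langle A'(u),-iw\rangle=\langle -iA'(u),w\rangle$; that is the self-adjointness relation, and it contradicts the skew-adjointness you correctly invoke one clause later to justify it. Applying the correct relation, the $S_\om'$-contribution is
\begin{equation*}
\langle A'(u),-iS_\om'(u)\rangle=-\langle -iA'(u),S_\om'(u)\rangle=-\langle a(u),S_\om'(u)\rangle=-P(u),
\end{equation*}
so the minus sign of the lemma appears immediately, and your intermediate claim that this term ``produces exactly $P(u)$'' is wrong: as written, your chain of equalities would yield $\frac{d}{dt}A(u(t))=+P(u(t))$, and there is no residual factor of $-i$ left over to flip it back. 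With this single correction your argument coincides with the paper's proof.
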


\begin{proof}
By Lemma \ref{lem:A.6} (2), we have $A(e^{is}v)=A(v)$ for all $v\in V_{\ve_0}$ and $s\in\R$. By differentiating this relation at $s=0$, we obtain $\langle v, a(v)\rangle=0$. Therefore, if $u$ is solution of \eqref{eq:1.1}, i.e., $i\pt_tu=E'(u)$, then we see that for $t$ satisfying $u(t)\in V_{\ve_0}$, 
\begin{align*}
  \frac{d}{dt}A(u(t))
  &=\langle A'(u(t)), \pt_tu(t)\rangle 
  =-\langle E'(u(t)), a(u(t))\rangle
\\&=-\langle S_\om'(u(t)), a(u(t))\rangle
  =-P(u(t)). 
\end{align*}
This completes the proof.
\end{proof}

\subsection{Proof of instability}

\begin{lemma}\label{lem:A.8}
Let $\lam_0>0$ and $\ve_0>0$ be taken smaller enough. Then 
\begin{align*}
S_\om(v+\lam a(v))
  \le S_\om(v) +\lam P(v)\quad \text{for $|\lam|<\lam_0$ and $v\in V_{\ve_0}$}.
\end{align*}
\end{lemma}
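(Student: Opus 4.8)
The plan is to prove the inequality
\[
  S_\om(v+\lam a(v)) \le S_\om(v) + \lam P(v)
\]
for $|\lam|<\lam_0$ and $v\in V_{\ve_0}$ by a second-order Taylor expansion of the map $\lam\mapsto S_\om(v+\lam a(v))$ around $\lam=0$, combined with a uniform bound on the second derivative. First I would record that by definition $P(v)=\la S_\om'(v),a(v)\ra$, so the first-order term of the expansion is exactly $\lam P(v)$; the content of the lemma is therefore that the remainder is nonpositive, or at least controlled. The natural route is to write, by Taylor's theorem,
\[
  S_\om(v+\lam a(v)) = S_\om(v) + \lam\la S_\om'(v),a(v)\ra
  + \frac{\lam^2}{2}\la S_\om''(v+\xi a(v))a(v),a(v)\ra
\]
for some $\xi=\xi(\lam)$ between $0$ and $\lam$. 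Since $S_\om$ is a sum of a quadratic term and power nonlinearities of degrees $p+1$ and $q+1$, with $1<p<q<1+4/d$, the functional is $C^2$ on $H^1(\R^d)$ and the second derivative $\la S_\om''(w)h,h\ra$ is continuous in $w$; this is where the exponent restriction $q<1+4/d$ (equivalently $q+1<2+4/d$, subcritical) enters to guarantee the needed Sobolev embeddings.

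The key step is to obtain a \emph{uniform} bound of the form
\[
  \bigl|\la S_\om''(v+\xi a(v))a(v),a(v)\ra\bigr| \le C
\]
valid for all $v\in V_{\ve_0}$ and all $|\xi|<\lam_0$, with $C$ independent of $v$ and $\xi$. For this I would use that $V_{\ve_0}$ is a bounded neighborhood of the orbit of $\phi_\om$, so $\|v\|_{H^1}$ is bounded there; from the explicit formula \eqref{eq:A.10} for $a(v)$ together with Lemma~\ref{lem:A.6}(3) and the bound \eqref{eq:A.9}, the norm $\|a(v)\|_{H^1}$ is bounded uniformly on $V_{\ve_0}$. Consequently $\|v+\xi a(v)\|_{H^1}$ stays in a fixed bounded set once $\lam_0$ is small, and the continuous dependence of $\la S_\om''(\cdot)h,h\ra$ on its base point, together with the subcritical Gagliardo--Nirenberg/Sobolev estimates, yields the uniform bound $C$. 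Then choosing $\lam_0$ small enough that $\tfrac{1}{2}\lam_0 C\le 1$ forces the remainder term to satisfy $\tfrac{\lam^2}{2}\la S_\om''(\cdots)a(v),a(v)\ra\le |\lam|$, which is absorbed so as to deliver the claimed inequality; in practice the cleanest phrasing is to keep the remainder as $O(\lam^2)$ and note it is dominated on the prescribed range.

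The main obstacle I anticipate is establishing the uniform-in-$v$ control of $\|a(v)\|_{H^1}$ and of the second-variation quadratic form, rather than the Taylor expansion itself. The delicate point is that $a(v)$ is built from the modulation phase $\theta(v)$ and its derivative $\theta'(v)$, so one must verify that these depend continuously (indeed $C^1$) on $v$ over the whole tube $V_{\ve_0}$ and not merely near $\phi_\om$; this is exactly what Lemma~\ref{lem:A.6} provides, and I would invoke parts (1)--(3) of that lemma to control each factor in \eqref{eq:A.10}. A secondary subtlety is that the argument must be uniform across $v$ in the tube, so I would first shrink $\ve_0$ (if necessary) to keep $V_{\ve_0}$ inside a fixed ball, then shrink $\lam_0$ depending on the resulting constant $C$; the order of these choices matters and should be made explicit to avoid circularity.
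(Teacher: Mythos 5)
Your proposal has a genuine gap: you only \emph{bound} the second-order remainder, but the lemma requires its \emph{sign}. Writing
\[
  S_\om(v+\lam a(v)) = S_\om(v)+\lam P(v)+\frac{\lam^2}{2}\la S_\om''(v+\xi a(v))a(v),a(v)\ra,
\]
a uniform estimate $\abs[\la S_\om''(\cdot)a(v),a(v)\ra]\le C$ gives only $S_\om(v+\lam a(v))\le S_\om(v)+\lam P(v)+\tfrac{C}{2}\lam^2$, and there is nothing in the claimed inequality into which an error of size $C\lam^2/2$ (or $|\lam|$) can be ``absorbed'': the statement has no slack term, so shrinking $\lam_0$ never removes a remainder of undetermined (possibly positive) sign. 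Indeed, without further input the statement is simply false --- for a positive-definite quadratic $S_\om$ the remainder is strictly positive --- and the weaker inequality with $+C\lam_0^2/2$ would also break the downstream argument (Lemmas \ref{lem:A.10} and \ref{lem:A.11}), where $\lam_0$ is fixed before the initial data, whose action deficit $S_\om(\phi_\om)-S_\om(u_0)$ is taken arbitrarily small.

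The missing idea, which is exactly what the paper's proof supplies, is that the hypothesis of Proposition~\ref{prop:A.3} forces the remainder to be \emph{negative}. From \eqref{eq:A.10}, $(\phi_\om,\psi)_{L^2}=0$, and $\theta(\phi_\om)=0$ one gets $a(\phi_\om)=\psi$, hence
\[
  R(0,\phi_\om)\ce\la S_\om''(\phi_\om)a(\phi_\om),a(\phi_\om)\ra=\la S_\om''(\phi_\om)\psi,\psi\ra<0 ,
\]
and by continuity $R(\lam,v)<0$ for small $|\lam|$ and $v$ near $\phi_\om$. To get this on the whole tube $V_{\ve_0}$ (not just a ball around $\phi_\om$) one uses the gauge invariance $R(\lam,e^{is}v)=R(\lam,v)$, since every $v\in V_{\ve_0}$ is a phase rotation of an element close to $\phi_\om$. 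With $R(\lam,v)<0$ throughout, the integral form of the Taylor remainder, $\lam^2\int_0^1(1-s)R(\lam s,v)\,ds$, is nonpositive and the inequality follows. Note that your writeup never invokes $\la S_\om''(\phi_\om)\psi,\psi\ra<0$ at all --- a sure sign something is off, since this is the only structural hypothesis available --- and that the uniformity issue you worried about is resolved by continuity plus phase invariance, not by norm bounds.
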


\begin{proof}
The Taylor expansion of $\lam\mapsto S_\om(v+\lam a(v))$ gives
\begin{equation}\label{eq:A.11}
  S_\om(v+\lam a(v))
  =S_\om(v)
  +\lam P(v)
  +\lam^2\int_0^1(1-s)R(\lam s, v)\,ds,
\end{equation}
where 
\[R(\lam, v)
  \ce \langle S_\om''(v+\lam a(v))a(v), a(v)\rangle. \]
By \eqref{eq:A.10}, $(\phi_\om, \psi)_{L^2}=0$, and $\theta(\phi_\om)=0$, we have $a(\phi_\om)=\psi$. This implies $R(0, \phi_\om)=\langle S_\om''(\phi_\om)\psi, \psi\rangle<0$. Thus, for small $|\lam|$ and for $v$ close to $\phi_\om$, we have $R(\lam, v)<0$. Moreover, $R$ is invariant under $v\mapsto e^{is}v$, i.e., $R(\lam, e^{is}v)=R(\lam, v)$, so we can take $\lam_0>0$ and $\ve_0>0$ smaller enough such that $R(\lam, v)<0$ for $|\lam|<\lam_0$ and $v\in V_{\ve_0}$. Hence the result follows from \eqref{eq:A.11}.
\end{proof}

\begin{lemma}\label{lem:A.9}
Let $\lam_0>0$ and $\ve_0>0$ be taken smaller enough. Then for any $v\in V_{\ve_0}$ there exists $\Lam(v)\in(-\lam_0, \lam_0)$ such that $v+\Lam(v)a(v)\ne 0$ and $K_\om(v+\Lam(v)a(v))=0$.
\end{lemma}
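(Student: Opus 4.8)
The plan is to use the map $\lam\mapsto K_\om(v+\lam a(v))$ and show it changes sign on $(-\lam_0,\lam_0)$, so that a zero $\Lam(v)$ is produced by the intermediate value theorem. The starting point is the variational characterization \eqref{eq:A.7}: since $v\in V_{\ve_0}$ is close to $\phi_\om$ and $\phi_\om$ is the minimizer on the Nehari manifold, I expect $K_\om(v)\ge 0$ for $v$ near $\phi_\om$ (this is the standard ``inside the Nehari manifold'' picture, and follows because $K_\om(\phi_\om)=0$ is a constrained minimum of $S_\om$). More precisely, I would first record that $\pt_\lam K_\om(\lam w)|_{\lam=1}=\la K_\om'(w),w\ra<0$ for $w$ near $\phi_\om$, since $\la K_\om'(\phi_\om),\phi_\om\ra\ne 0$ and by continuity this quantity stays strictly negative on $V_{\ve_0}$ after shrinking $\ve_0$; this is what makes the zero of $K_\om$ along the scaling direction unique and transversal.

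The key computation is to differentiate $g(\lam)\ce K_\om(v+\lam a(v))$ at $\lam=0$ and evaluate at $v=\phi_\om$. Using $a(\phi_\om)=\psi$ (established in the proof of Lemma~\ref{lem:A.8}) I get $g(0)=K_\om(\phi_\om)=0$ and $g'(0)=\la K_\om'(\phi_\om), a(\phi_\om)\ra=\la K_\om'(\phi_\om),\psi\ra$. The crucial point is that this derivative is nonzero: if it were zero, then by Lemma~\ref{lem:A.5} we would have $\la S_\om''(\phi_\om)\psi,\psi\ra\ge 0$, contradicting the hypothesis $\la S_\om''(\phi_\om)\psi,\psi\ra<0$. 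Hence $\la K_\om'(\phi_\om),\psi\ra\ne 0$, so $g$ is strictly monotone through $0$ at $v=\phi_\om$, giving $g(\lam)>0$ and $g(-\lam)<0$ (or vice versa) for small $\lam>0$ when $v=\phi_\om$.

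From here I would pass from the special point $\phi_\om$ to all of $V_{\ve_0}$ by continuity and the symmetry $K_\om(e^{is}v)=K_\om(v)$. Since $(\lam,v)\mapsto K_\om(v+\lam a(v))$ is continuous, and since at $v=\phi_\om$ it takes values of opposite strict sign at two interior points $\pm\lam_1\in(-\lam_0,\lam_0)$, the same sign pattern persists for $v$ in a neighborhood; using the phase invariance of $K_\om$ and of $a$ (so that the whole construction is invariant under $v\mapsto e^{is}v$) I can shrink $\ve_0$ uniformly so that for \emph{every} $v\in V_{\ve_0}$ there are interior points where $K_\om(v+\lam a(v))$ is strictly positive and strictly negative. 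The intermediate value theorem then yields $\Lam(v)\in(-\lam_0,\lam_0)$ with $K_\om(v+\Lam(v)a(v))=0$. The nonvanishing $v+\Lam(v)a(v)\ne 0$ also follows by continuity from $\phi_\om+\Lam(\phi_\om)a(\phi_\om)$ being close to $\phi_\om\ne 0$, again after shrinking $\ve_0$.

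The main obstacle I anticipate is making the continuity/uniformity step genuinely uniform over $V_{\ve_0}$ rather than merely pointwise near $\phi_\om$; the tool that rescues this is the phase invariance, which reduces the neighborhood $V_{\ve_0}$ to a neighborhood of the single orbit $\{e^{is}\phi_\om\}$, so that a finite/compact-orbit argument upgrades the pointwise sign information at $\phi_\om$ to a uniform statement. A secondary technical check is that $a(v)$ is continuous (indeed $C^0$) as a map into $\Hr^1$, which is already guaranteed by \eqref{eq:A.10} and Lemma~\ref{lem:A.6}(3), so $g(\lam)=K_\om(v+\lam a(v))$ is jointly continuous and the intermediate value theorem applies cleanly.
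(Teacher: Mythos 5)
Your proposal is correct and takes essentially the same route as the paper's own proof: you establish $\la K_\om'(\phi_\om),\psi\ra\ne0$ from Lemma~\ref{lem:A.5} and the hypothesis $\la S_\om''(\phi_\om)\psi,\psi\ra<0$, use $a(\phi_\om)=\psi$ to get a strict sign change of $\lam\mapsto K_\om(v+\lam a(v))$ at $v=\phi_\om$, propagate it to all of $V_{\ve_0}$ by continuity together with the phase invariance of $K_\om$ and $a$, and conclude by the intermediate value theorem, handling $v+\Lam(v)a(v)\ne0$ by shrinking $\lam_0,\ve_0$. The only blemish is the motivational claim in your first paragraph that $K_\om(v)\ge0$ for $v$ near $\phi_\om$: this is false, since $K_\om'(\phi_\om)\ne0$ forces $K_\om$ to take both signs in every neighborhood of $\phi_\om$ (as your own sign-change argument confirms), but it plays no role in your actual proof.
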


\begin{proof}
From Lemma~\ref{lem:A.5} and $\langle S_\om''(\phi_\om)\psi, \psi\rangle<0$, we obtain $\langle K'(\phi_\om), \psi\rangle \ne0$. We may assume that $\langle K'(\phi_\om), \psi\rangle>0$.

Taking $\lam_0$ and $\ve_0$ smaller if necessary, $v+\Lam a(v)\ne0$ for all $v\in V_{\ve_0}$ and $\Lam\in(-\lam_0, \lam_0)$. Let $F(\Lam, v)\ce K_\om(v+\Lam a(v))$. Then $F(0, \phi_\om)=0$ and $\pt_\Lam F(0, \phi_\om)=\langle K'(\phi_\om), \psi\rangle>0$. Therefore, we can take $\Lam_1, \Lam_2\in(-\lam_0, \lam_0)$ such that $\Lam_1<0<\Lam_2$ and $F(\phi_\om, \Lam_1)<0<F(\phi_\om, \Lam_2)$. By the continuity of $v\mapsto F(v, \Lam_j)~(j=1,2)$, we deduce $F(v, \Lam_1)<0<F(v, \Lam_2)$ for $v$ close to $\phi_\om$. Moreover, $F$ is invariant under $v\mapsto e^{is}v$, so we can take small $\ve_0>0$ such that $F(v, \Lam_1)<0<F(v, \Lam_2)$ for all $v\in V_{\ve_0}$. Therefore, the intermediate value theorem implies that $F(v, \Lam(v))=0$ for some $\Lam(v)\in(\Lam_1, \Lam_2)$, which implies the conclusion. 
\end{proof}

\begin{lemma}\label{lem:A.10}
For any $v\in V_{\ve_0}$, we have
\[S_\om(\phi_\om) 
  \le S_\om(v)
  +\lam_0|P(v)|. \]
\end{lemma}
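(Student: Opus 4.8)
The plan is to combine the two preceding modulational lemmas. From Lemma~\ref{lem:A.9}, for any $v\in V_{\ve_0}$ there is some $\Lam(v)\in(-\lam_0,\lam_0)$ with $v+\Lam(v)a(v)\ne0$ and $K_\om(v+\Lam(v)a(v))=0$; that is, the perturbed element $v+\Lam(v)a(v)$ satisfies the Nehari constraint appearing in the variational characterization \eqref{eq:A.7}. The first step is therefore to recall that $v+\Lam(v)a(v)\in\Hr^1(\R^d)$, which holds because $a(v)\in\Hr^1(\R^d)$ by the construction \eqref{eq:A.10} together with Lemma~\ref{lem:A.6}~(3). Since this element is nonzero and lies on the Nehari manifold, \eqref{eq:A.7} immediately gives the lower bound
\begin{align*}
  S_\om(\phi_\om)\le S_\om\bigl(v+\Lam(v)a(v)\bigr).
\end{align*}

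The second step is to apply Lemma~\ref{lem:A.8} with the particular choice $\lam=\Lam(v)$. Since $\Lam(v)\in(-\lam_0,\lam_0)$ and $v\in V_{\ve_0}$, the hypotheses of Lemma~\ref{lem:A.8} are met, so
\begin{align*}
  S_\om\bigl(v+\Lam(v)a(v)\bigr)\le S_\om(v)+\Lam(v)P(v).
\end{align*}
Chaining the two inequalities yields $S_\om(\phi_\om)\le S_\om(v)+\Lam(v)P(v)$. To finish, I would bound the last term crudely using $|\Lam(v)|<\lam_0$, namely $\Lam(v)P(v)\le|\Lam(v)|\,|P(v)|<\lam_0|P(v)|$, which gives exactly the stated conclusion $S_\om(\phi_\om)\le S_\om(v)+\lam_0|P(v)|$.

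There is essentially no hard obstacle here: the lemma is a clean synthesis of the variational characterization \eqref{eq:A.7} with the two technical estimates already established in Lemmas~\ref{lem:A.8} and~\ref{lem:A.9}. The only point requiring mild care is ensuring that the same $\lam_0$ and $\ve_0$ can be fixed so that both Lemma~\ref{lem:A.8} and Lemma~\ref{lem:A.9} hold simultaneously; this is harmless since each lemma only asks these constants to be taken small enough, so I would simply take the smaller of the two admissible thresholds at the outset. With that synchronization in place, the proof is a two-line chaining of inequalities followed by the trivial bound on $\Lam(v)P(v)$.
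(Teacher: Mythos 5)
Your proof is correct and matches the paper's own argument exactly: both chain the variational characterization \eqref{eq:A.7} (applied to $v+\Lam(v)a(v)$ from Lemma~\ref{lem:A.9}) with the Taylor-expansion bound of Lemma~\ref{lem:A.8}, then estimate $\Lam(v)P(v)\le\lam_0|P(v)|$. Your added remarks on membership in $\Hr^1(\R^d)$ and on synchronizing the constants $\lam_0,\ve_0$ are sound and only make explicit what the paper leaves implicit.
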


\begin{proof}
For $v\in V_{\ve_0}$, it follows from Lemma~\ref{lem:A.9}, \eqref{eq:A.7}, and Lemma~\ref{lem:A.8} that
\[S_\om(\phi_\om)
  \le S_\om(v+\Lam(v)a(v))
  \le S_\om(v)
  +\Lam(v)P(v)
  \le S_\om(v)
  +\lam_0\lvert P(v)\rvert. 
  \]
This completes the proof.
\end{proof}

\begin{lemma}\label{lem:A.11}
If $u_0 \in V_{\ve_0}$ and $S_\om(u_0)<S_\om(\phi_\om)$, then the solution $u$ of \eqref{eq:1.1} with $u(0)=u_0$ satisfies $u(t_0)\notin V_{\ve_0}$ for some $t_0\in\R$. 
\end{lemma}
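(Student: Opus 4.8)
The plan is to exploit the variational estimate established in Lemma~\ref{lem:A.10} to derive a contradiction with the assumption that the solution remains in $V_{\ve_0}$ for all time. The mechanism is a standard convexity/escape argument in the spirit of \cite{SS85, O14}: the quantity $A(u(t))$ is monotone (its derivative is $-P(u(t))$ by Lemma~\ref{lem:A.7}), it is bounded by \eqref{eq:A.9}, and Lemma~\ref{lem:A.10} forces $|P(u(t))|$ to be bounded below as long as $u(t)$ stays in the tube. These three facts are incompatible with global confinement.

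First I would argue by contradiction: suppose $u(t)\in V_{\ve_0}$ for all $t\in\R$. By conservation of energy and charge, $S_\om(u(t))=S_\om(u_0)$ is constant in $t$, so the strict inequality $S_\om(u_0)<S_\om(\phi_\om)$ persists. Setting $\del_0\ce S_\om(\phi_\om)-S_\om(u_0)>0$, Lemma~\ref{lem:A.10} applied to $v=u(t)$ gives
\begin{align*}
\del_0 = S_\om(\phi_\om)-S_\om(u(t)) \le \lam_0|P(u(t))|,
\end{align*}
hence $|P(u(t))|\ge \del_0/\lam_0>0$ for all $t$. I would next check that $P(u(t))$ does not change sign along the flow. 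Since $t\mapsto P(u(t))$ is continuous (the solution is continuous in $H^1$ and $P$ is continuous on $V_{\ve_0}$) and never vanishes, it has constant sign; without loss of generality $P(u(t))\ge \del_0/\lam_0$ for all $t$ (otherwise replace $t$ by $-t$).

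Then, invoking Lemma~\ref{lem:A.7}, I would integrate
\begin{align*}
A(u(t))-A(u_0) = -\int_0^t P(u(s))\,ds \le -\frac{\del_0}{\lam_0}\,t \quad\text{for } t\ge 0,
\end{align*}
which shows $A(u(t))\to-\infty$ as $t\to\infty$. This contradicts the uniform bound \eqref{eq:A.9}, namely $\sup_{v\in V_{\ve_0}}|A(v)|<\infty$. Therefore the solution must leave $V_{\ve_0}$ at some finite time, which is exactly the conclusion. The main obstacle, and the point requiring care, is verifying that $P(u(t))$ has a definite sign: one must confirm that continuity of $t\mapsto u(t)$ in $\Hr^1(\R^d)$ together with continuity of $P$ on $V_{\ve_0}$ legitimately transfers to continuity (and hence sign-definiteness) of $t\mapsto P(u(t))$, and that the local well-posedness keeps $u(t)$ within the radial class so that $P$ is defined along the whole trajectory; the rest is the integration argument, which is routine once the sign is fixed.
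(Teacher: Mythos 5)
Your proof is correct and follows essentially the same route as the paper's: argue by contradiction, use conservation of $S_\om$ together with Lemma~\ref{lem:A.10} to get the uniform lower bound $|P(u(t))|\ge \del_0/\lam_0$, deduce sign-definiteness of $t\mapsto P(u(t))$ by continuity, and integrate $\frac{d}{dt}A(u(t))=-P(u(t))$ (Lemma~\ref{lem:A.7}) to contradict the bound \eqref{eq:A.9}. The only cosmetic difference is your ``replace $t$ by $-t$'' reduction, which is unnecessary (and would require checking the time-reversal symmetry $u(t,x)\mapsto \overline{u(-t,x)}$ and its compatibility with $A$ and $P$); the paper simply treats the two sign cases symmetrically, each yielding $A(u(t))\to\mp\infty$ and hence the same contradiction.
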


\begin{proof}
Suppose that $u(t)\in V_{\ve_0}$ for all $t\in\R$. By \eqref{eq:A.9}, we have
\begin{equation}\label{eq:A.12}
  \sup_{t\in\R}\lvert A(u(t))\rvert <\infty.
\end{equation}
On the other hand, it follows from $S_\om(u_0)<S_\om(\phi_\om)$, the conservation law of $S_\om$, and Lemma~\ref{lem:A.10} that
\[0<S_\om(\phi_\om)
  -S_\om(u_0)
  =S_\om(\phi_\om)
  -S_\om(u(t))
  \le \lam_0\lvert P(u(t))\rvert
  \]
for all $t\in\R$. Thus, there exists $\del>0$ such that the one of the followings holds: 
(i) $P(u(t))\ge \del$ ($t\in\R$) or (ii) $P(u(t))\le -\del$ ($t\in\R$). 

When (i) holds, by using Lemma~\ref{lem:A.7} we obtain $\frac{d}{dt}A(u(t))=-P(u(t))\le-\del$ for all $t\in\R$. This implies $A(u(t))\to-\infty$ as $t\to\infty$. Similarly, when (ii) holds, we see that $A(u(t))\to+\infty$ as $t\to\infty$. In each case we obtain the contradiction with \eqref{eq:A.12}. This completes the proof.
\end{proof}

\begin{proof}[Proof of Proposition~\ref{prop:A.3}]
Since $S_\om'(\phi_\om)=0$ and $\langle S_\om''(\phi_\om)\psi, \psi\rangle<0$, the Taylor expansion of $\lam\mapsto S_\om(\phi_\om+\lam\psi)$ gives 
\begin{align*}
  S_\om(\phi_\om +\lam\psi)
  =S_\om(\phi_\om)
  +\frac{\lam^2}{2}\langle S_\om''(\phi_\om)\psi, \psi\rangle 
  +o(\lam^2)
  <S_\om(\phi_\om)
\end{align*}
for small $|\lam|>0$. Therefore, Lemma \ref{lem:A.11} implies that the solution $u_\lam$  of \eqref{eq:1.1} with $u_\lam(0)=\phi_\om+\lam\psi$ for small $|\lam|>0$ satisfies $u_\lam(t_0)\notin V_{\ve_0}$ for some $t_0\in\R$. This means that the standing wave $e^{i\om t}\phi_\om$ is radially unstable.
\end{proof}



\appendix
\section{Explicit formulas of $\eta_0$ when $q=2p-1$}\label{sec:B}

In this section we derive the explicit formula of $\eta_0$ in Theorem \ref{thm:1.2} for the case $q=2p-1$. It is known (see \cite{O95dp}) that in this case $\phi_\omega$ is expressed in terms of elementary functions as
\begin{align*}
    \phi_{\omega}(x) 
    =\l\{\begin{aligned}
   &\l( \frac{ (p+1)\omega}{ \sqrt{1+\frac{(p+1)^2}{p}\omega } \cosh ((p-1)\sqrt{\omega}x) -1}\r)^{1/(p-1)}&
   &\text{if}~\omega>0,
\\ &\l( \frac{ 2(p+1) }{ \frac{(p+1)^2}{p}+(p-1)^2x^2 } \r)^{1/(p-1)}&
   &\text{if}~\omega=0.
\end{aligned}\r.
\end{align*}
Based on this formula, we prove that $\eta_0$ is also expressed in terms of elementary functions as follows.
\begin{theorem}
\label{thm:B.1}
If $q=2p-1$, then $\eta_0$ is expressed as
\begin{equation}\label{eq:B.1}
  \begin{aligned}
    \eta_0(x)
   &=\frac{1}{(p-1)(p+1)}\left(\frac{ 2(p+1) }{ \frac{(p+1)^2}{p}+(p-1)^2x^2 }\right)^{\frac{1}{p-1}+1}
\\ &\quad\times\left(\frac{(p+1)^4}{8p^2}-\frac{(p-1)^2(p+1)^2}{4p}x^2-\frac{(p-1)^4}{24}x^4\right).
\end{aligned}
\end{equation}
In particular, $\eta_0(x)\sim-|x|^{-\frac{2}{p-1}+2}$ as $|x|\to\infty$.
\end{theorem}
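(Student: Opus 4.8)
The plan is to compute $\eta_0$ directly from its definition $\eta_0 = \lim_{\om\downarrow0}\pt_\om\phi_\om$ using the explicit family $\phi_\om$ available when $q=2p-1$. Since the closed form of $\phi_\om$ is given for all $\om\ge 0$, one can avoid the inverse-function machinery of Section~\ref{sec:2} entirely and simply differentiate. Concretely, I would write $\phi_\om(x) = (p+1)^{1/(p-1)} \om^{1/(p-1)} D(x,\om)^{-1/(p-1)}$ where $D(x,\om)\ce \sqrt{1+\tfrac{(p+1)^2}{p}\om}\,\cosh((p-1)\sqrt{\om}\,x)-1$ denotes the denominator, and then apply the product and chain rules to obtain $\pt_\om\phi_\om$. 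The factor $\om^{1/(p-1)}$ in the numerator is the delicate point, since $\pt_\om$ of it produces $\tfrac{1}{p-1}\om^{1/(p-1)-1}$, which is singular as $\om\downarrow 0$; this singularity must cancel against the behavior of the denominator, so the two contributions to $\pt_\om\phi_\om$ have to be combined over a common power of $D(x,\om)$ before taking the limit.

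The main step is therefore a careful small-$\om$ expansion of $D(x,\om)$ and its $\om$-derivative. Using $\cosh((p-1)\sqrt{\om}\,x)=1+\tfrac{(p-1)^2}{2}\om x^2+\tfrac{(p-1)^4}{24}\om^2 x^4+O(\om^3)$ and $\sqrt{1+\tfrac{(p+1)^2}{p}\om}=1+\tfrac{(p+1)^2}{2p}\om-\tfrac{(p+1)^4}{8p^2}\om^2+O(\om^3)$, I would expand the product to second order in $\om$. The leading $O(1)$ term cancels the $-1$, so $D(x,\om) = \om\, d_1(x) + \om^2 d_2(x) + O(\om^3)$ with $d_1(x)=\tfrac{(p+1)^2}{2p}+\tfrac{(p-1)^2}{2}x^2$ and $d_2(x)$ collecting the second-order coefficients. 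Note that $(p+1)\om/D(x,\om)\to (p+1)/d_1(x)=\bigl(\tfrac{(p+1)^2}{p}+(p-1)^2x^2\bigr)^{-1}\cdot 2(p+1)$, which recovers exactly the base $\phi_0(x)^{p-1}$ appearing in the stated formula, a useful consistency check.

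From here the computation is bookkeeping: writing $\phi_\om(x)^{-(p-1)} = \om^{-1}(p+1)^{-1}D(x,\om)$ and differentiating the relation $\phi_\om^{p-1}=(p+1)\om/D$ logarithmically gives $(p-1)\eta_\om/\phi_\om = \pt_\om\log\phi_\om^{p-1} = \tfrac{1}{\om}-\tfrac{\pt_\om D}{D}$. Substituting the expansions $D=\om d_1+\om^2 d_2+O(\om^3)$ and $\pt_\om D = d_1 + 2\om d_2 + O(\om^2)$, the $1/\om$ pole cancels and the limit $\om\downarrow 0$ yields $(p-1)\eta_0/\phi_0 = d_2/d_1 \cdot(\text{sign})$ — more precisely a ratio of the expansion coefficients that, after multiplying by $\phi_0$ and simplifying, must reproduce the polynomial $\tfrac{(p+1)^4}{8p^2}-\tfrac{(p-1)^2(p+1)^2}{4p}x^2-\tfrac{(p-1)^4}{24}x^4$ divided by $(p-1)(p+1)$ against the power $\phi_0^{(p-1)+1}$. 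The hard part will be organizing the algebra so that the second-order coefficient $d_2(x)$ is computed correctly, since it mixes the $\om^2$ term of the square root with the cross term of the two expansions; an error here propagates directly into the quartic. Finally, the stated asymptotic $\eta_0(x)\sim -|x|^{-2/(p-1)+2}$ follows by inspection, since for large $|x|$ the quartic $-\tfrac{(p-1)^4}{24}x^4$ dominates while the prefactor decays like $|x|^{-2/(p-1)-2}$, and this is consistent with the general estimate \eqref{eq:1.13}.
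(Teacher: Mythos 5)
Your proposal is correct and follows essentially the same route as the paper: both arguments rest on expanding $\sqrt{1+\tfrac{(p+1)^2}{p}\om}\,\cosh((p-1)\sqrt{\om}\,x)-1$ to second order in $\om$ at fixed $x$ and then extracting $\lim_{\om\downarrow0}\pt_\om\phi_\om$, the paper doing so by differentiating the regular quantity $\Psi_\om=\phi_\om^{-(p-1)}=D(x,\om)/((p+1)\om)$ while you use logarithmic differentiation, where the $1/\om$ poles cancel --- an algebraically equivalent bookkeeping device. Your coefficients $d_1$, $d_2$ and the sign work out exactly as in the paper (one finds $(p-1)\eta_0/\phi_0=-d_2/d_1$ and $1/d_1=\phi_0^{p-1}/(p+1)$, giving $\eta_0=-\phi_0^{p}\,d_2/((p-1)(p+1))$), and your closing asymptotic argument matches the paper's.
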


\begin{proof}
We set 
\[\Psi_\om(x)
  \ce \phi_\om(x)^{-(p-1)}
  =\frac{ \sqrt{1+\frac{(p+1)^2}{p}\omega } \cosh ((p-1)\sqrt{\omega}x) -1}{(p+1)\omega}
  \]
and fix $x\in\R$. By using the expansions 
\begin{align*}
  &\sqrt{1+y}=
  1+\frac{y}{2}
  -\frac{y^2}{8}
  +O(y^4),
  \quad\cosh y=
  1+\frac{y^2}{2}
  -\frac{y^4}{24}
  +O(y^6)
\end{align*}
as $y\to0$, we obtain
\begin{align*}
  \Psi_\om(x)
  &=\phi_0(x)^{-(p-1)}
\\&\quad+\frac{1}{p+1}\left(-\frac{(p+1)^4}{8p^2}
  +\frac{(p-1)^2(p+1)^2}{4p}x^2
  +\frac{(p-1)^4}{24}x^4\right)\om 
\\&\qquad+O(\om^2)
\end{align*}
as $\om\downarrow 0$. This implies that $\lim_{\om\downarrow0}\Psi_\om(x)=\phi_0(x)^{-(p-1)}$ and that
\[\lim_{\om\downarrow 0}\pt_\om\Psi_\om(x)
  =\frac{1}{p+1}\left(-\frac{(p+1)^4}{8p^2}
  +\frac{(p-1)^2(p+1)^2}{4p}x^2
  +\frac{(p-1)^4}{24}x^4\right). \]
Therefore, by using the relation 
\begin{align*}
\eta_0=\lim_{\om\downarrow0}\pt_\om\phi_\om= -\frac{1}{p-1}\lim_{\om\downarrow0}\Psi_\om^{-\frac{1}{p-1}-1}\pt_\om\Psi_\om,
\end{align*}
we obtain \eqref{eq:B.1}.
\end{proof}
\begin{remark}
A relevant calculation can be seen in \cite{H18, H22}, the key difference is that we need here to obtain the information up to the first order with respect to $\om$ while the calculation of the previous results corresponds to obtain the information of the zeroth order.  
\end{remark}


\section*{Acknowledgments}

N.F. was supported by JSPS KAKENHI Grant Number JP20K14349. M.H. was supported by JSPS KAKENHI Grant Numbers JP19J01504 and JP22K20337, and by PRIN project 2020XB3EFL. 

\subsection*{Data availability}
Data sharing not applicable to this article as no datasets were generated or analysed during the current study.

\subsection*{Conflict of interest}
The authors declare that they have no conflict of interest.


\end{document}